\def\rr{{\mathbb R}}
\def\rn{{{\rr}^n}} 
\def\zz{{\mathbb Z}}
\def\cb{{\mathcal B}}
\def\cd{{\mathcal D}}
\def\cf{{\mathcal F}}
\def\cm{{\mathcal M}}
\def\gz{{\gamma}}
\def\boz{{\Omega}} 
\def\sz{\sigma}
\def\fz{\infty}
\def\wz{\widetilde}
\def\ls{\lesssim}
\def\gs{\gtrsim}
\def\r{\right}
\def\lf{\left}
\def\dist{{\mathop\mathrm{\,dist\,}}}
\def\ball{{\mathop\mathrm{\,ball\,}}}
\def\llc{{\mathop\mathrm{\,LLC}}}
\def\lp{{L^{p}(\boz)}}
\def\diam{{\mathop\mathrm{\,diam\,}}}
\def\bint{{\ifinner\rlap{\bf\kern.35em--}
\int\else\rlap{\bf\kern.45em--}\int\fi}\ignorespaces}
\def\bbint{{\ifinner\rlap{\bf\kern.35em--}
\hspace{0.078cm}\int\else\rlap{\bf\kern.45em--}\int\fi}\ignorespaces}
\newtheorem{thm}{Theorem}[section]
\newtheorem{lem}{Lemma}[section]
\newtheorem{cor}{Corollary}[section]
\newtheorem{defn}{Definition}[section]
\numberwithin{equation}{section}
\begin{document}

\arraycolsep=1pt

\title{\Large\bf Criteria for Optimal Global Integrability of Haj\l asz-Sobolev Functions
\footnotetext{\hspace{-0.35cm}
\noindent{2000 {\it Mathematics Subject Classification:}} 46E35
\endgraf  {\it Key words and phases:}  John domain, weak carrot domain, local linear connectivity, Haj\l asz-Sobolev space,
Haj\l asz-Sobolev-Poincar\'e imbedding, Haj\l asz-Trudinger imbedding, Haj\l asz-Sobolev extension
\endgraf The author was supported by the Academy of Finland grant 120972.
}}
\author{Yuan Zhou}
\date{ }
\maketitle

\begin{center}
\begin{minipage}{13.5cm}\small
{\noindent{\bf Abstract}\quad
The author establishes some geometric criteria for a domain of $\rn$ with $n\ge2$
to support a $(pn/(n-ps),\,p)_s$-Haj\l asz-Sobolev-Poincar\'e imbedding with $s\in(0,\,1]$ and $p\in(n/(n+s),\,n/s)$
or an $s$-Haj\l asz-Trudinger imbedding with $s\in(0,\,1]$.
}
\end{minipage}
\end{center}

\medskip

\section{Introduction\label{s1}}

\hskip\parindent

The study of the Haj\l asz spaces $\dot M^{1,\,p}$
 was initiated by Haj\l asz \cite{h96} on arbitrary metric measure spaces,
see \cite{h96,hk00, h03,y03,ks08,kyz09,kyz09c}
for further discussions, generalizations and connections
 with the classical (Hardy-)Sobolev, Besov and Triebel-Lizorkin spaces.
In particular, a fractional version $\dot M^{s,\,p}$ with  $s\in(0,\,1)$
was introduced by  Yang \cite{y03},
and a Sobolev-type version $\dot M^{1,\,p}_\ball$ on  domains by Koskela and Saksman \cite{ks08}.

We first recall some definitions and notions.
In this paper, we always let $n\ge 2$ and $\boz$ be a domain of $\rn$.
For every $s\in (0,1]$ and measurable function
$u$, denote by $\cd^s(u)$ the collection of all nonnegative measurable functions $g$
such that
\begin{equation}\label{e1.1}
|u(x)-u(y)|\le |x-y|^s[g(x)+g(y)]
\end{equation}
for all $x,\ y\in\boz\setminus E$, where
$E\subset\boz$ with $|E|=0$.
We also denote by $\cd_\ball^s(u)$ the collection of
all nonnegative measurable functions $g$ such that
 \eqref{e1.1} holds for all $x,\ y\in\boz\setminus E$
satisfying $|x-y|<\frac12\dist(x,\,\partial\boz)$.

\begin{defn}\rm\label{d1.1}
Let $s\in (0,1]$ and $p\in(0,\,\fz)$.
Then the homogeneous Haj\l asz space
 $\dot M^{s,\,p}(\boz)$ is the
space of all measurable functions $u$ such that
$$\|u\|_{\dot M^{s,\,p} (\boz)}\equiv\inf_{g\in \cd^s (u)}\|g\|_\lp<\fz,$$
and its  Sobolev-type version $\dot M^{s,\,p}_\ball(\boz)$ is the
space of all measurable functions $u$ such that
$$\|u\|_{\dot M^{s,\,p}_\ball(\boz)}\equiv\inf_{g\in \cd^s_\ball(u)}\|g\|_\lp<\fz.$$
\end{defn}

Obviously, for all $s\in(0,\,1]$ and $p\in(0,\,\fz)$,
$\dot M^{s,\,p}(\boz)\subset \dot M^{s,\,p}_\ball(\boz)$.
If $\boz$ is a uniform domain,
then $\dot M^{s,\,p}_\ball(\boz)=\dot M^{s,\,p}(\boz)$ for all $s\in(0,\,1]$ and $p\in(n/(n+s),\,\fz)$;
 see \cite[Theorem 19]{ks08}. But, generally, we cannot expect that $\dot M^{s,\,p}(\boz)=\dot M^{s,\,p}_\ball(\boz)$.
For example,  this fails when $\boz=B(0,\,1)\setminus \{(x,\,0):\,x\ge0\}\subset\rr^2.$

 Haj\l asz-Sobolev spaces are closely related to the classical (Hardy-)Sobolev and Triebel-Lizorkin spaces.
In this paper, we always denote by $\dot W^{1,\,p}(\boz)$ with $p\in(1,\,\fz)$ the homogeneous Sobolev space,
by $\dot H^{1,\,p}(\boz)$ with $p\in(0,\,1]$ the Hardy-Sobolev space as in \cite{m90,m91},
and by  $F^s_{p,\,q}(\rn)$ with $s\in\rr$ and $p,\,q\in(0,\,\fz]$
the homogeneous Triebel-Lizorkin spaces as in \cite{t83}.
It was proved in \cite{h96,ks08} that
$\dot W^{1,\,p}(\boz)=\dot M^{1,\,p}_\ball(\boz)$ for $p\in(1,\,\fz)$
 and  $\dot H^{1,\,p}(\boz)=\dot M^{1,\,p}_\ball(\boz)$ for $p\in(n/(n+1),\,1]$,
 which together with \cite{t83} implies that
$\dot M^{1,\,p}(\rn)=\dot M^{1,\,p}_\ball(\rn)=\dot F^1_{p,\,2}(\rn)$ for all $p\in(n/(n+1),\,\fz)$,
while for all $s\in(0,\,1)$ and $p\in(n/(n+s),\,\fz)$,
 $\dot M^{s,\,p}(\rn)=\dot M^{s,\,p}_\ball(\rn)=\dot F^s_{p,\,\fz}(\rn)$
as proved in \cite{y03,kyz09}.

Now we recall some notions  on imbeddings.
Let $\boz$ be a bounded domain of $\rn$, $s\in(0,\,1]$ and $p\in(n/(n+s),n/s)$.
Then $\boz$ is said to support a $(pn/(n-ps),\,p)_s$-Haj\l asz-Sobolev-Poincar\'e
(for short, $(pn/(n-ps),\,p)_s$-{HSP}) imbedding if there exists a  constant $C>0$ such that
for all $u\in\dot M^{s,\,p}_\ball(\boz)$,
\begin{equation}
 \label{e1.2}
\|u-u_\boz\|_{L^{pn/(n-ps)}(\boz)}\le C\|u\|_{\dot M^{s,\,p}_\ball(\boz)},
\end{equation}
where $u_\boz\equiv\frac1{|\boz|}\int_\boz u(z)\,dz$.
Similarly, $\boz$ is said to support an $s$-Haj\l asz-Trudinger (for short, $s$-HT)
imbedding if there exists a  constant $C>0$ such that for all $u\in\dot M^{s,\,n/s}_\ball(\boz)$,
\begin{equation}\label{e1.3}
 \|u-u_\boz\|_{\phi_s(L)(\boz)}\le C\|u\|_{\dot M^{s,\,n/s}_\ball(\boz)},
\end{equation}
where and in what follows, $\phi_s(t)\equiv\exp(t^{n/(n-s)})-1$ and
\begin{equation} \label{e1.4}
\|u\|_{\phi_s(L)(\boz)}\equiv\inf\lf\{t>0,\,\int_\boz\phi_s\lf(\frac{|u(x)|}t\r)\,dx\le1\r\}.
\end{equation}
It should be pointed out that since $\dot M^{1,\,p}_\ball(\boz)=\dot W^{1,\,p}(\boz)$ for all $p\in(1,\,\fz)$,
 then \eqref{e1.2} with $s=1$ and $p\in[1,\,n)$ coincides with the classical $(pn/(n-p),\,p)$-Sobolev-Poincar\'e imbedding as in \cite[(1.1)]{bk96},
and \eqref{e1.3} with $s=1$ coincides with the classical Trudinger imbedding as in \cite[(1.2)]{bk96}.

Recently, some geometric criteria were established in \cite{b88,bk95,bk96} for a domain  to support
a $(pn/(n-p),\,p)$-Sobolev-Poincar\'e imbedding for $p\in[1,\,n)$ or a Trudinger imbedding.
More precisely, Bojarski \cite{b88} first proved that a John domain as in Definition \ref{d2.1}
 always supports a $(pn/(n-p),\,p)$-Sobolev-Poincar\'e  imbedding for all $p\in[1,\,n)$.
Smith and Stegenga \cite{ss90} proved that a weak carrot domain as in Definition \ref{d2.2}
always supports the Trudinger imbedding.
Conversely, let $\boz$ be a bounded planar domain or a bounded domain in $\rn$ with $n\ge 3$
satisfying an additional separation property when $p\in(1,\,n)$
and a slice property when $p=n$; see Definitions \ref{d2.3} and \ref{d2.4} below.
Then Buckley and Koskela \cite{bk95,bk96} proved that
if $\boz$  supports a $(pn/(n-p),\,p)$-Sobolev-Poincar\'e imbedding
for some/all $p\in[1,\,n)$, then it is a John domain,
and if $\boz$ supports the Trudinger imbedding, then it is a weak carrot domain.

The purpose of this paper is to
establish  some geometric criteria for a domain of $\rn$ with $n\ge2$
to support a $(pn/(n-ps),\,p)_s$-HSP  imbedding with $s\in(0,\,1]$ and $p\in(n/(n+s),\,n/s)$
or an $s$-HT imbedding with $s\in(0,\,1]$.

To this end, we first establish the linear local connectivity (for short, LLC) of a domain that supports
the $(pn/(n-ps),\,p)_s$-{HSP} imbedding,
where the notion of LLC was introduced by Gehring \cite{g77}.
Recall that a domain $\boz$ is said to have the LLC property if
there exists a positive constant $b$ such that for all $z\in\rn$ and $r>0$,

$\llc(1)$ \quad points in $\boz\cap B(z,\,r)$ can be joined in $\boz\cap B(z,\,r/b)$;

$\llc(2)$ \quad points in $\boz\setminus B(z,\,r)$ can be joined in $\boz\setminus B(z,\,br)$.

\noindent Then, as proved by Gehring and Martio \cite{gm85b},   a $\dot W^{1,\,n}$-extension domain has
the LLC property, and by
 \cite[Theorem  6.4]{k90},
a $\dot W^{1,\,p}$-extension domain  with $p\in(n-1,\,n)$ has the LLC(2) property; see also \cite{g82,gr83,gv,vgl}
and their references.
Here and in what follows,
$\boz$ is called an $A$-extension domain
with $A=\dot M^{s,\,p}_\ball$, $\dot W^{1,\,p}$ or $\dot H^{1,\,p}$
if for every $u\in  A(\boz)$, there exists a $v\in   A(\rn)$
such that $v|_\boz=u$ and $\|v\|_{A(\rn)}\ls
\|u\|_{A(\boz)} $.
Here, we extend the results in \cite{gm85b,k90} as follows. 

\begin{thm}\label{t1.1} Let  $s\in(0,\,1]$ and $p\in(n/(n+s),\,n/s)$. 
If $\boz$ is a bounded $\dot M^{s,\,p}_\ball$-extension domain
or $\boz$ is a bounded domain that supports a $(pn/(n-ps),\,p)_s$-{\rm HSP} imbedding, 
then $\boz$ has the {\rm LLC(2)} property.
\end{thm}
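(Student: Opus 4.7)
The plan is to argue by contradiction, combining a test function construction with the HSP imbedding (and, in the extension case, its consequence from the extension property).

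\emph{Reduction of the extension case.} If $\boz$ is a bounded $\dot M^{s,p}_\ball$-extension domain, then each $u\in\dot M^{s,p}_\ball(\boz)$ extends to some $v\in\dot M^{s,p}(\rn)=\dot M^{s,p}_\ball(\rn)$ (by the uniformity of $\rn$ and \cite[Theorem 19]{ks08}). Applying the standard Sobolev-Poincar\'e inequality for $v$ on a ball $B\supset\boz$ in $\rn$, and comparing $v_B$ with $u_\boz$ via H\"older's inequality, yields the HSP imbedding for $u$ on $\boz$ with a constant independent of $u$. Thus it suffices to prove LLC(2) under the HSP hypothesis.

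\emph{Test function and bounds.} Assume $\boz$ supports HSP but fails LLC(2). Then for arbitrarily small values of the relevant parameter, there exist $z\in\rn$, $r>0$, and distinct components $U,V$ of $\boz\setminus B(z,br)$ each meeting $\boz\setminus B(z,r)$. Any rectifiable curve in $\boz$ from $U$ to $V$ must cross the annular shell between $B(z,br)$ and $B(z,r)$, hence has length at least $\rho:=|r-br|$. Define
$$
u(w)=\min\bigl\{1,\,d_\boz(w,U)/\rho\bigr\},\qquad w\in\boz,
$$
where $d_\boz$ denotes the geodesic distance in $\boz$. Then $u\equiv 0$ on $U$ and $u\equiv 1$ on $V$. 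For any pair $(w,w')$ satisfying the ball condition $|w-w'|<\tfrac12\dist(w,\partial\boz)$, the segment $[w,w']$ lies in $\boz$ and $d_\boz(w,w')=|w-w'|$, so $u$ is $\rho^{-1}$-Lipschitz along such pairs. A standard Lipschitz-to-Haj\l asz conversion for bounded Lipschitz functions produces $g\in\cd^s_\ball(u)$ with $g\ls\rho^{-s}$ supported in an intrinsic $\rho$-neighborhood of $U$, which is contained in $\boz\cap B(z,C_0 r)$. This yields $\|g\|_{L^p(\boz)}\ls\rho^{-s}r^{n/p}\ls r^{n/p-s}$, while from $u\equiv 0$ on $U$ and $u\equiv 1$ on $V$ one has
$$
\|u-u_\boz\|_{L^{pn/(n-ps)}(\boz)}\gs\min(|U|,|V|)^{(n-ps)/(pn)}.
$$
Substituting into the HSP imbedding gives the quantitative bound $\min(|U|,|V|)\ls r^n$, where the implicit constant depends only on the HSP constant and $b$.

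\emph{Closing the contradiction.} The main obstacle is turning this quantitative inequality into an outright contradiction, since $r$ need not tend to zero as the LLC parameter varies. The standard remedy is to iterate: the failure of LLC(2) at every admissible value of the parameter produces a sequence of failing configurations at progressively refined scales, each giving a smaller component whose measure is forced below a quantity tending to zero in the LLC parameter. Yet this smaller component is open, nonempty, and by definition contains a point at Euclidean distance exceeding $r$ from the center; extracting a limit of such configurations, and using the boundedness of $\boz$ together with the topological nondegeneracy (each component must persist as a connected open piece with a distant point) forces the smaller components to degenerate in a way incompatible with $\boz$ being a bounded domain, yielding the desired contradiction and thereby LLC(2).
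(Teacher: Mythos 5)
Your first half is essentially the paper's starting point, but the proposal has a genuine gap at exactly the place you flag yourself: the estimate $\min(|U|,|V|)\ls r^n$ is simply not a contradiction, and the ``closing the contradiction'' paragraph is not an argument. A domain failing LLC(2) will, for each small $b$, exhibit configurations in which one of the two components is a thin tentacle of tiny measure; the inequality $\min(|U|,|V|)\ls r^n$ is then vacuously satisfied, and there is no reason for a ``limit of configurations'' to degenerate or to conflict with boundedness of $\boz$. What is missing is a \emph{complementary} estimate that plays the small measure of the bad component against the smallness of the separating ball. The paper supplies this with a second test function $v(y)\equiv\inf_{\gz}\ell(\gz\cap B(z,b_0r))$ (infimum over curves joining a fixed interior point $x_0$ to $y$), whose Haj\l asz gradient is $\ls (b_0r)^{1-s}\chi_{\boz\cap B(z,b_0r)}$ --- supported only in the \emph{inner} ball --- and which jumps by at least $b_0r$ across the separation. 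Feeding this into the HSP imbedding yields the upper bound $|\boz_x\cap(B(z,r)\setminus B(z,b_0r))|\ls (b_0r)^n$, while an iterated version of your first construction (ramping between radii $b_jr<b_{j+1}r$ chosen by halving the measure of the annular part of $\boz_x$, rather than ramping once across the whole annulus) gives the lower bound $(1/2-b_0)r\ls|\boz_x\cap(B(z,r)\setminus B(z,b_0r))|^{1/n}$. Combining the two forces $b_0\gs 1$, which is the claim. Without some substitute for this second function your approach cannot close.

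Two further points. First, your Haj\l asz-gradient verification for $u=\min\{1,d_\boz(\cdot,U)/\rho\}$ is incomplete for cross-component pairs: if $w\in U$ and $w'\in V$ both lie far from $B(z,r)$ and form a ball pair, then $|u(w)-u(w')|=1$ while your $g$ vanishes at both points. One must show such pairs cannot occur as admissible pairs; this is the content of the paper's Lemma \ref{l3.1}, Case iii) (a ball contained in $\boz$ minus a closed ball is connected, so the two points would lie in the same component), and it forces one to work with $\cd^{s,1/8}_\ball$ and invoke Lemma \ref{l2.3} rather than with $\cd^{s}_\ball$ directly. Second, $u$ is not identically $1$ on $V$ (only on $V\setminus B(z,r)$ and on $\{d_\boz(\cdot,U)\ge\rho\}$), so your lower bound for $\|u-u_\boz\|_{L^{pn/(n-ps)}(\boz)}$ needs to be restated; the paper avoids the $u_\boz$ issue altogether by normalizing $u=0$ on a fixed incenter ball $B(x_0,r_0)$, which turns the HSP imbedding into the inequality \eqref{e3.1}. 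Your reduction of the extension case to the imbedding case is fine and matches Lemma \ref{l2.5}.
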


The proof of Theorem \ref{t1.1} is given in Section \ref{s3}.
We point out that the approach used here is different from that used by Koskela in
\cite[Theorem 6.4]{k90}, where he  used the $p$-capacity to prove the LLC(2)
property of a $\dot W^{1,\,p}$-extension domain for $p\in(n-1,\,n)$.
In fact, when $1<p\le n-1$, as Koskela \cite{k90} pointed out,
the $p$-capacity makes no sense
since ${\rm Cap}_p(K_0,\,K_1,\,\rn)=0$ for every pair of disjoint continua
$K_0,\,K_1\subset\rn$.
So some new ideas are required to prove Theorem \ref{t1.1} 
as the result is new even in the case $s=1$ and $1<p\le n-1$. 
To this end, we will simplify this question,
 and then combine  some of the ideas from \cite{bk95,hkt08,hkt08b}
and the properties of  Haj\l asz-Sobolev functions.

Then, as a corollary to Theorem \ref{t1.1}, we have the following conclusion,
which complements the results in \cite{gm85b,k90}.

\begin{cor}\label{c1.1}
If $\boz$  is a bounded $\dot W^{1,\,p}$-extension domain when $p\in(1,\,n)$
or bounded $\dot H^{1,\,p}$-extension domain with $p\in(n/(n+1),\,1]$,
then $\boz$ has the {\rm LLC(2)} property.
\end{cor}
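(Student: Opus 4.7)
The plan is to deduce Corollary \ref{c1.1} directly from Theorem \ref{t1.1} applied with $s=1$, by reducing the $\dot W^{1,\,p}$- and $\dot H^{1,\,p}$-extension hypotheses to a $\dot M^{1,\,p}_\ball$-extension hypothesis. The key input is the identification of spaces recalled in the introduction: $\dot W^{1,\,p}(\boz) = \dot M^{1,\,p}_\ball(\boz)$ with equivalent norms for every domain $\boz$ and every $p\in(1,\,\fz)$, and $\dot H^{1,\,p}(\boz) = \dot M^{1,\,p}_\ball(\boz)$ with equivalent norms for every domain $\boz$ and every $p\in(n/(n+1),\,1]$. These identifications also apply on $\rn$, where all three scales coincide with $\dot M^{1,\,p}(\rn)$.

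For the first case, I would take $\boz$ to be a bounded $\dot W^{1,\,p}$-extension domain with $p\in(1,\,n)$, pick any $u\in\dot M^{1,\,p}_\ball(\boz)$, re-interpret it as an element of $\dot W^{1,\,p}(\boz)$ with comparable norm, apply the extension hypothesis to produce $v\in\dot W^{1,\,p}(\rn)$ with $v|_\boz=u$ and $\|v\|_{\dot W^{1,\,p}(\rn)}\ls\|u\|_{\dot W^{1,\,p}(\boz)}$, and then re-interpret $v$ as an element of $\dot M^{1,\,p}_\ball(\rn)$ so that
$$\|v\|_{\dot M^{1,\,p}_\ball(\rn)}\ls\|v\|_{\dot W^{1,\,p}(\rn)}\ls\|u\|_{\dot W^{1,\,p}(\boz)}\ls\|u\|_{\dot M^{1,\,p}_\ball(\boz)}.$$
This shows that $\boz$ is a bounded $\dot M^{1,\,p}_\ball$-extension domain, and since $(1,\,n)\subset(n/(n+1),\,n)$ the range required by Theorem \ref{t1.1} with $s=1$ is satisfied; the theorem then supplies the LLC(2) property. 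The Hardy-Sobolev range $p\in(n/(n+1),\,1]$ is handled by the same argument with $\dot H^{1,\,p}$ replacing $\dot W^{1,\,p}$; this range is again contained in the admissible interval $(n/(n+1),\,n)$ for $s=1$ in Theorem \ref{t1.1}.

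There is no genuine obstacle at the level of this corollary: once Theorem \ref{t1.1} is available, the proof is a bookkeeping transfer across equivalent norms. The substantive geometric and analytic content lies entirely inside Theorem \ref{t1.1}, whose proof has to bypass the $p$-capacity approach of \cite{k90} since $p$-capacity degenerates when $1<p\le n-1$, precisely the new range covered here.
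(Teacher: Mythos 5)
Your proposal is correct and is exactly the deduction the paper intends: the identifications $\dot W^{1,\,p}(\boz)=\dot M^{1,\,p}_\ball(\boz)$ for $p\in(1,\,\fz)$ and $\dot H^{1,\,p}(\boz)=\dot M^{1,\,p}_\ball(\boz)$ for $p\in(n/(n+1),\,1]$ convert the extension hypotheses into the $\dot M^{1,\,p}_\ball$-extension hypothesis of Theorem \ref{t1.1} with $s=1$, whose admissible range $(n/(n+1),\,n)$ contains both intervals. The paper gives no separate proof beyond this observation, so your argument matches it.
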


Applying Theorem \ref{t1.1}, we further establish some geometric criteria for a domain to support  a
$(pn/(n-ps),\,p)_s$-HSP imbedding, which generalizes the criteria in \cite{b88,bk95}.

\begin{thm}\label{t1.2}
(i) A John domain of $\rn$ as in Definition \ref{d2.1} always supports a
 $(pn/(n-ps),\,p)_s$-{\rm HSP} imbedding as in \eqref{e1.2} for all $s\in(0,\,1]$
and  $p\in(n/(n+s),\,n/s)$.

(ii) Assume that $\boz$ is a bounded domain of $\rn$ and satisfies the separation property as in Definition \ref{d2.3}. 
If $\boz$ supports  a $(pn/(n-ps),\,p)_s$-{\rm HSP} imbedding for some  $s\in(0,\,1]$
and  $p\in(n/(n+s),\,n/s)$, then $\boz$ is a John domain.
\end{thm}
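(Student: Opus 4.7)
The plan for part (i) is to reduce to a local Haj\l asz-Sobolev-Poincar\'e inequality on balls and then propagate it globally via a chain argument. For any Euclidean ball $B=B(x,r)$ with $2B\subset\boz$ and any $g\in\cd^s_\ball(u)$, the Haj\l asz pointwise representation combined with the Hardy-Littlewood maximal inequality yields the local estimate
$$\lf(\bint_B|u-u_B|^{pn/(n-ps)}\,dz\r)^{(n-ps)/(pn)}\ls r^s\lf(\bint_{2B}g^p\,dz\r)^{1/p}.$$
The John property produces a Whitney-type covering $\{B_i\}$ of $\boz$ with bounded overlap together with a Boman chain linking each $B_i$ to a fixed central ball $B_0$ through chains of uniformly controlled length and multiplicity. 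Telescoping the local estimate along these chains and applying H\"older's inequality gives \eqref{e1.2}. This is the standard Boman chain argument adapted to the Haj\l asz setting, and goes through once one notes $2B_i\subset\boz$ for each Whitney ball.

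For part (ii) I argue by contrapositive. Assume $\boz$ satisfies the separation property but fails the John condition; I produce a family of functions violating \eqref{e1.2}. By Theorem \ref{t1.1}, the HSP hypothesis implies $\boz$ has the LLC(2) property. The failure of the John condition, filtered through the separation property, produces for arbitrarily large quantitative parameters a witness triple $(B_0,z,S)$, where $B_0\subset\boz$ is a base ball, $z\in\boz$ is a target point, and $S$ is a ``slice surface'' disconnecting $\boz$ into open pieces $U_0\supset B_0$ and $U_1\ni z$, and with a tubular neighborhood $S_\ez$ of $S$ in $\boz$ satisfying
$$\ez^{-s}|S_\ez|^{1/p}\ll \min\{|U_0|,|U_1|\}^{(n-ps)/(pn)}.$$
I take $u$ to be $0$ on $U_0\setminus S_\ez$, $1$ on $U_1\setminus S_\ez$, and a Lipschitz interpolant across $S_\ez$ with Lipschitz constant $\ls\ez^{-1}$. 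Then the lower bound $\|u-u_\boz\|_{L^{pn/(n-ps)}(\boz)}\gs\min\{|U_0|,|U_1|\}^{(n-ps)/(pn)}$ is immediate.

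The crux is producing $g\in\cd^s_\ball(u)$ with $\|g\|_{\lp}\ls\ez^{-s}|S_\ez|^{1/p}$; combined with the lower bound above, this contradicts \eqref{e1.2} as the witness parameter tends to infinity. On $S_\ez$, a Lipschitz-to-$\dot M^{s,p}$ majorant argument suffices. Outside $S_\ez$, $u$ is piecewise constant but jumps between $U_0$ and $U_1$, so \emph{only} ball-close pairs $(x,y)$ satisfying $|x-y|<\frac12\dist(\{x,y\},\partial\boz)$ need be controlled. The content of the argument is that LLC(2)---forbidding such a pair from bypassing $S$ inside $\boz\setminus S_\ez$---together with the separation property---controlling the geometry of $S$ itself---forces any such pair straddling $U_0$ and $U_1$ to already lie inside $S_\ez$. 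This norm estimate is the principal obstacle; the remainder is a standard Buckley-Koskela-type optimization over the quantitative non-John witness, now adapted to the fractional Haj\l asz setting.
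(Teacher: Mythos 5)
Your part (i) is essentially the paper's proof: a local Haj\l asz--Sobolev--Poincar\'e estimate on Whitney balls (the paper's Lemma \ref{l2.4}, combined with Lemma \ref{l2.3} to pass from $\cd^s_\ball(u)$ to a genuine Haj\l asz gradient on dilated Whitney pieces) propagated through a Boman chain; just note that your local estimate is not valid for an arbitrary ball with $2B\subset\boz$, only for Whitney-type balls with $\diam B\ls\dist(B,\partial\boz)$, and that summing the chain contributions uses the Fefferman--Stein vector-valued maximal inequality rather than H\"older alone.

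Part (ii) takes a genuinely different route from the paper, and as written it has a real gap. The paper does \emph{not} build any test functions inside the proof of Theorem \ref{t1.2}(ii): it invokes Theorem \ref{t1.1} to get LLC(2), and then gives a purely geometric argument that, for the curve $\gz$ furnished by the separation property, one has $d(\gz(t),\boz^\complement)\gs\diam\gz([0,t])$ (if some $y\in\gz([0,t])$ escaped $B(\gz(t),(N-1)d(\gz(t),\boz^\complement))$, LLC(2) would put $y$ and $x_0$ in one component of the complement of $B(\gz(t),b(N-1)d(\gz(t),\boz^\complement))$, contradicting separation once $b(N-1)>C_0$); such a curve is then upgraded to a John curve by the Martio--Sarvas and N\"akki--V\"ais\"al\"a modifications. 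Your contrapositive scheme instead re-derives the quantitative Buckley--Koskela content from scratch, and the two steps you flag but do not execute are exactly where the difficulty sits. First, the existence of a witness triple with
$$\ez^{-s}|S_\ez|^{1/p}\ll\min\{|U_0|,\,|U_1|\}^{(n-ps)/(pn)}$$
does not follow from the mere failure of the John condition: the component $U_1$ containing the far point has large diameter but no a priori volume lower bound of the required order, and obtaining one is precisely the delicate two-test-function interplay (the cut-off $u$ giving a lower volume bound, the arclength function $v$ giving the upper bound $\ls(b_0r)^n$) that the paper carries out in the proof of Theorem \ref{t1.1}, not something you can cite as ``standard optimization.'' Second, the assertion that every ball-close pair straddling $U_0$ and $U_1$ already lies in $S_\ez$ is only verified in the paper (Lemma \ref{l3.1}, Case iii) when the separating set is a sphere, via the connectivity of $B(y,2|w-y|)\setminus\overline{B(z,b_0r)}$; for a general ``slice surface'' $S$ this needs an argument tied to the actual geometry of $S$, which you have not specified. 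You should either supply these two estimates in full or, more economically, route the proof through Theorem \ref{t1.1} as the paper does.
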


To prove Theorem \ref{t1.2}(ii), we will use the LLC(2) property of these domains given in
Theorem \ref{t1.1}. 
This is slightly different from that of \cite{bk95}.
On the other hand, notice that $(\rn,\,d_s,\,dx)$ is an Ahlfors $n/s$-regular
metric measure spaces, when   $d_s(x,\,y)=|x-y|^s$ for all $x,\,y\in\rn$ and
$dx$ denotes the Lebesgue measure.
 Observe that $M^{s,\,n/s}_\ball(\boz)$ coincides with $\dot M^{1,\,n/s}_\ball(\boz,\,d_s,\,dx)$,
the Haj\l asz-Sobolev space 
on domains of $(\rn,\,d_s,\,dx)$ defined similarly to Definition \ref{d1.1}.
Then Theorem \ref{t1.2}(i) can be dudeced from results by Chua and Wheeden \cite{cw}.
For the reader's convenience, we give a short proof, which will
use the ideas from Bojarski \cite{b88},
the chain property of a John domain as proved by Boman \cite{b},
and a key imbedding on balls established by Haj\l asz \cite[Theorem 8.7]{h03}.

We also establish an analogue of Theorem \ref{t1.2} at the end point $p=n/s$ when $s\in(0,\,1]$,
which generalizes the criteria established in \cite{ss90,bk96,bo99},
and whose proof uses some ideas from \cite{ss90,ss90b,bk96,bo99} and will be given in Section \ref{s4}.
Also see \cite{mp} for similar inequalities on balls.

\begin{thm}\label{t1.3}
(i) A weak carrot domain of $\rn$ as in Definition \ref{d2.2} always supports
an $s$-{\rm HT} imbedding for all $s\in(0,\,1]$.

(ii) Assume that $\boz$ is a bounded domain of $\rn$ and  satisfies the slice property as in Definition \ref{d2.4}.
If $\boz$ supports an $s$-{\rm HT} imbedding for some  $s\in(0,\,1]$,
then $\boz$ is a weak carrot domain.
\end{thm}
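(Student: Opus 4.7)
For part (i), the strategy is to identify $\dot M^{s,n/s}_\ball(\boz)$ with $\dot M^{1,n/s}_\ball(\boz,d_s,dx)$ on the Ahlfors $(n/s)$-regular metric measure space $(\rn,d_s,dx)$, as noted in the paragraph following Theorem \ref{t1.2}. This reduces the $s$-HT imbedding to a critical-exponent Trudinger inequality on a metric space. On a weak carrot domain, fix a distinguished ball $B_0$ near the center. For any $x\in\boz$, a weak carrot curve from $x$ to $B_0$ together with an associated Whitney-type chain of balls $B_0,B_1,\ldots,B_k$ with $x\in B_k$ yields the pointwise estimate
$$|u(x)-u_{B_0}|\le C\sum_i r(B_i)^s\lf(\bint_{\sigma B_i} g^{n/s}\r)^{s/n},$$
via the ball version of the Haj\l asz-Sobolev-Poincar\'e inequality \cite[Theorem 8.7]{h03}. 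Replacing $u_{B_0}$ by $u_\boz$ at the cost of an additive constant and integrating $\exp(c|u-u_\boz|^{n/(n-s)})$ using the carrot geometry, in the spirit of Smith-Stegenga \cite{ss90,ss90b} and Boman \cite{b}, produces \eqref{e1.3}.

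For part (ii), I would argue by contradiction along the lines of \cite{bk96,bo99}. Assume $\boz$ has the slice property but violates the weak carrot condition, so that for arbitrarily large $N$ there exists a boundary point $\xi$ such that any curve from $\xi$ to a fixed central ball $B_0$ must cross $N$ disjoint slices $S_1,\ldots,S_N$ furnished by the slice property. Using these slices, I build a test function $u=\sum_{i=1}^N a_i v_i$, where $v_i$ is a Lipschitz bump equal to $1$ on the component of $\boz\setminus S_i$ containing $\xi$, equal to $0$ on the component containing $B_0$, and with a controlled Haj\l asz gradient supported near $S_i$. A careful choice of coefficients $a_i$ (tuned to the exponential Orlicz growth in $\phi_s$) should yield $\|u\|_{\dot M^{s,n/s}_\ball(\boz)}$ bounded uniformly in $N$, while $u-u_\boz$ has oscillation growing with $N$, making $\|u-u_\boz\|_{\phi_s(L)(\boz)}$ grow and contradicting \eqref{e1.3} for large $N$. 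This forces the weak carrot condition.

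The main obstacle is the test-function estimate in part (ii). The condition $|x-y|<\frac12\dist(x,\partial\boz)$ in the definition of $\cd^s_\ball$ sharply restricts which pairs contribute to an admissible $g$, so the slices $S_i$ and bumps $v_i$ must be arranged so that pairs straddling a slice are either not Haj\l asz-admissible or contribute correspondingly little. Balancing the coefficients $a_i$ against the slice sizes so as to produce the precise logarithmic saturation needed to defeat the exponential Trudinger estimate is where the endpoint analysis is most delicate; once this is done, extracting the desired carrot curve from a maximal sequence of bad slice-points proceeds by a compactness/diagonal argument as in \cite{bk96,bo99}. Theorem \ref{t1.1} is not directly applicable here since $p=n/s$ is an endpoint, but the slice property plays an analogous role to LLC(2) in the construction.
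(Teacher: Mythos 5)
Your overall strategy for both parts coincides with the paper's (Whitney decomposition plus quasihyperbolic chaining for (i); slice-based test functions defeating the Orlicz norm for (ii)), but in each part the step you leave open is exactly the step that carries the content. In (i), the chain estimate with terms $r(B_i)^s(\bbint_{\sigma B_i}g^{n/s})^{s/n}$ cannot simply be exponentiated and integrated: the chain from $B_0$ down to a point $x$ contains infinitely many balls, and at the critical exponent the tail of that sum is a Riesz-potential term, not a finite sum one can handle by H\"older alone. The paper separates the local oscillation $u-u_B$ on each Whitney ball from the discrete chain $u_B-u_{B_0}$; the local part requires a Trudinger inequality on balls for $\dot M^{s,\,n/s}$ (Lemma 4.1 of the paper, proved via the pointwise bound $|u(x)-u_B|\ls\int_{4B}\cm(g)(y)|x-y|^{s-n}\,dy$ and the Riesz-potential estimate of Gilbarg--Trudinger), while the chain part is bounded by $N^{(n-s)/n}\ls [k_\boz(x_B,\,x_0)]^{(n-s)/n}$ and then summed using Lemma 2.1 ($\int_\boz e^{\sz k_\boz(x_0,\,x)}\,dx<\fz$). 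You would need to state and prove that ball-level exponential estimate; it is not the subcritical form of \cite[Theorem 8.7]{h03} that you invoke.

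In (ii), you explicitly defer the coefficient choice, which is the crux. The paper takes $u_i(z)=\inf_{\wz\gz}\ell(\wz\gz\cap S_i)$ (so that, as in Lemma 3.2, $r_i^{1-s}\chi_{B(x_i,\,2C_5 d(x_i,\,\boz^\complement))}$ is, up to a constant, an admissible fractional $s$-gradient of $u_i$) and sets $u=j^{-s/n}\sum_{i}r_i^{-1}u_i$. The uniform bound $\|u\|_{\dot M^{s,\,n/s}_\ball(\boz)}\ls1$ is not automatic because the gradient supports $B(x_i,\,2C_5 d(x_i,\,\boz^\complement))$ overlap; the paper passes to the pairwise disjoint core balls $B(x_i,\,C_5^{-1}d(x_i,\,\boz^\complement))\subset S_i$ via the Fefferman--Stein vector-valued maximal inequality. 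The lower bound uses $u\ge(j-1)^{1-s/n}$ on $S_j$ together with $|S_j|\sim [d(x,\,\boz^\complement)]^n$ to force $\|u\|_{\phi_s(L)(\boz)}\gs\{j/\log(1+[d(x,\,\boz^\complement)]^{-1})\}^{(n-s)/n}$, whence $j\ls\log(1+[d(x,\,\boz^\complement)]^{-1})$ for every $x$; since $j+1\ge k_\boz(x,\,y)/C_5$, this gives the quasihyperbolic boundary condition directly, with no contradiction or compactness argument needed. Until you produce the normalization $j^{-s/n}r_i^{-1}$ and the vector-valued maximal argument, part (ii) is a plan rather than a proof.
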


Notice that, as proved in \cite{bk95,bk96}, every simply connected domain in $\rr^2$ or
every domain in $\rr^n$ with $n \ge 3$ that is quasiconformally equivalent to a uniform domain satisfies the slice property
and the separation property. So, as a corollary to Theorems \ref{t1.2} and \ref{t1.3},
we have the following conclusion.

\begin{cor}\label{c1.2} Let $\boz$ be a bounded simply connected domain  in $\rr^2$ or  a bounded
domain in $\rn$ with $n\ge3$ that is quasiconformally equivalent to a uniform domain.
Then

(i) $\boz$ is a John domain if and only if it supports a $(pn/(n-ps),\,p)_s$-{\rm HSP} imbedding for some/all  $s\in(0,\,1]$
and  $p\in(n/(n+s),\,n/s)$;

(ii) $\boz$ is a weak carrot domain if and only if it supports an $s$-{\rm HT} imbedding for some/all $s\in(0,\,1]$.
\end{cor}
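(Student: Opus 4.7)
The plan is to deduce Corollary~\ref{c1.2} directly from Theorems~\ref{t1.2} and~\ref{t1.3}, using the key observation (quoted from \cite{bk95,bk96}) that every bounded simply connected planar domain and every bounded domain in $\rn$ with $n\ge3$ quasiconformally equivalent to a uniform domain satisfies both the separation property (Definition~\ref{d2.3}) and the slice property (Definition~\ref{d2.4}). Thus the geometric hypotheses needed to apply the converse directions of Theorems~\ref{t1.2}(ii) and \ref{t1.3}(ii) are automatic for the class of domains considered in this corollary.

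For part (i), the forward implication (``only if'' for all $s$ and $p$ in the stated range) is immediate from Theorem~\ref{t1.2}(i), which requires no auxiliary geometric hypothesis on $\boz$ beyond being John. For the reverse implication, suppose $\boz$ supports a $(pn/(n-ps),\,p)_s$-HSP imbedding for some pair $(s,p)$ with $s\in(0,1]$ and $p\in(n/(n+s),n/s)$. Since $\boz$ belongs to the class above, it satisfies the separation property, so Theorem~\ref{t1.2}(ii) applies and yields that $\boz$ is a John domain. Composing these two implications gives the ``some/all'' dichotomy: one imbedding suffices to conclude John, and John then yields all imbeddings in the range.

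Part (ii) proceeds identically, with Theorem~\ref{t1.3}(i) supplying the forward direction (weak carrot $\Rightarrow$ $s$-HT for all $s\in(0,1]$, with no extra hypothesis) and Theorem~\ref{t1.3}(ii) supplying the converse once we invoke the slice property, which is again guaranteed for our class of domains. The ``some/all'' phrasing is handled in the same bootstrap manner as in part (i): an $s$-HT imbedding for a single $s$ forces $\boz$ to be a weak carrot domain, and this in turn forces all the $s$-HT imbeddings.

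There is no genuine obstacle here, as the corollary is essentially a packaging statement; the only thing to be careful about is to correctly quote the separation/slice property for the stated class of domains from \cite{bk95,bk96} and to verify that the ranges of $(s,p)$ in Theorem~\ref{t1.2} match those in the corollary. The substantive work has already been done in Theorems~\ref{t1.1}--\ref{t1.3}, whose converse parts rest on the LLC(2) property established in Theorem~\ref{t1.1}.
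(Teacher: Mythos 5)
Your proposal is correct and is exactly the argument the paper intends: Corollary \ref{c1.2} is stated as an immediate consequence of Theorems \ref{t1.2} and \ref{t1.3} together with the fact from \cite{bk95,bk96} that the domains in question satisfy both the separation and slice properties, and the some/all bootstrap is handled just as you describe. No discrepancy with the paper's reasoning.
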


This paper is organized as follows. In Section \ref{s2}, we recall some basic notions and properties  of the
domains and Haj\l asz-Sobolev spaces. In Section \ref{s3}, we present the proof of Theorem \ref{t1.1}.
In Section \ref{s4}, we give the proofs of Theorems \ref{t1.2} and \ref{t1.3}.

\section{Preliminaries}\label{s2}

In this section, we recall some  notions and basic properties of
domains and  Haj\l asz-Sobolev spaces.
We begin with the notion of John domain.

\begin{defn}\label{d2.1}  \rm
Let $\boz$ be a bounded domain of $\rn$ with $n\ge2$. Then
$\boz$ is called a John domain with respect to
  $x_0\in\boz$ and  $C>0$  if
 for every $x\in\boz$,
there exists a rectifiable curve $\gz:[0,\,1]\to\boz$ parametrized by arclength such that
$\gz(0)=x$, $\gz(1)=x_0$ and $d(\gz(t),\,\boz^\complement)\ge Ct$.
\end{defn}

Now we recall the notion of a weak carrot domain (or domains satisfying the quasihyperbolic boundary condition).
To this end, for every pair of points $x,\,y\in\boz$, define their quasihyperbolic distance $k_\boz(x,\,y)$  by
$$k_\boz(x,\,y)\equiv \inf_\gz\int_{\gz}\frac1{d(z,\,\boz^\complement)}\,|dz|,$$
where the infimum is taken over all rectifiable curves $\gz\subset\boz$ joining $x$ and $y$.
As proved in \cite{go79}, $k_\boz$ is a geodesic distance, namely,
there exists a curve $\gz_{x,\,y}\subset\boz$ such that  $$k_\boz(x,\,y)=\int_{\gz_{x,\,y}}\frac1{d(z,\,\boz^\complement)}\,|dz|.$$

\begin{defn}\label{d2.2}  \rm
A domain $\boz$ is said to satisfy a weak carrot condition (or
quasihyperbolic boundary condition)
 with respect to $x_0\in\boz$ and $C\ge1$ if for all $x\in\boz$,
\begin{equation}\label{e2.1}
 k_\boz(x,\,x_0)\le C\log\lf(\frac C{d(x,\,\boz^\complement)}\r).
\end{equation}

\end{defn}

It is easy to see that the John and weak carrot conditions are independent of the choice of $x_0$ in the sense that
if $\boz$ is a John or  weak carrot domain with respect to   $x_0$ and  $C$,
then for any other $x_1\in\boz$, there exists a positive constant $\wz C$ such that
$\boz$ is still a John or weak carrot domain with respect to  $x_1$ and $\wz C$, respectively.
See \cite{bo99} for more details.

The following characterization of a weak carrot domain established by
 Smith and  Stegenga \cite{ss90} will be used in the proof of Theorem \ref{t1.3}.

\begin{lem}\label{l2.1} Let $\boz$ is a proper subdomain of $\rn$ and let $x_0\in\boz$.
Then $\boz$ is a weak carrot domain if and only if there exists a positive constant $\sz$
such that $$\int_\boz\exp(\sz k_\boz(x_0,\,x)))\,dx<\fz.$$
\end{lem}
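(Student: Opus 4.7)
I would prove the two directions separately; the ``backward'' direction (integrability implies weak carrot) is comparatively easy. Suppose $\int_\boz e^{\sigma k_\boz(x_0, y)}\,dy = M < \infty$. Fix $x \in \boz$ and consider the Euclidean ball $B = B(x, d(x, \boz^\complement)/2) \subset \boz$, on which $d(\cdot, \boz^\complement) \ge d(x, \boz^\complement)/2$. Integrating $1/d(\cdot, \boz^\complement)$ along the straight segment from $x$ to any $y \in B$ gives $k_\boz(x, y) \le 1$, and the triangle inequality for $k_\boz$ then yields $k_\boz(x_0, y) \ge k_\boz(x_0, x) - 1$ throughout $B$. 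Inserting this into the hypothesis,
$$M \ge \int_B e^{\sigma k_\boz(x_0, y)}\,dy \ge |B|\, e^{\sigma(k_\boz(x_0, x) - 1)} \gtrsim d(x, \boz^\complement)^n\, e^{\sigma k_\boz(x_0, x)},$$
and rearranging and taking logarithms yields $k_\boz(x_0, x) \le (n/\sigma) \log(C/d(x, \boz^\complement))$, which is \eqref{e2.1}.

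For the forward direction, the weak carrot bound gives $e^{\sigma k_\boz(x_0, x)} \le (C/d(x, \boz^\complement))^{\sigma C}$ pointwise, so it suffices to prove $\int_\boz d(x, \boz^\complement)^{-\gamma}\,dx < \infty$ for $\gamma := \sigma C$ sufficiently small. I would argue via a Whitney decomposition $\boz = \bigcup_j Q_j$ with $\ell(Q_j) \sim d(Q_j, \boz^\complement)$, organizing the cubes by Whitney-chain generation $m$ from a fixed cube containing $x_0$. Two inputs drive the sum: the bounded geometry of the Whitney decomposition yields $\#\{Q_j : m(Q_j) = m\} \le c^m$ for a dimensional constant $c$, and the weak carrot hypothesis applied at the center $x_{Q_j}$, together with the comparability of $m(Q_j)$ with $k_\boz(x_0, x_{Q_j})$, gives $\ell(Q_j) \lesssim e^{-m/C}$. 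Hence
$$\int_\boz d(x, \boz^\complement)^{-\gamma}\,dx \sim \sum_j \ell(Q_j)^{n - \gamma} \lesssim \sum_m c^m\, e^{-(n - \gamma) m / C},$$
which is a geometric series.

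The principal obstacle is the convergence of this series: it requires $\log c < (n - \gamma)/C$, a balance between the dimensional growth constant $c$ and the weak carrot constant $C$ that is not automatic from the hypothesis alone. The remedy I would adopt is to refine the Whitney chain by following a quasihyperbolic geodesic from $x_0$, so that the effective branching is controlled by the number of Whitney cubes crossed per unit of $k_\boz$-distance rather than by the crude neighbor count in the Whitney graph; this effectively replaces $c$ by a much smaller constant tied to $C$. Once convergence is secured, the forward implication follows and the equivalence is established.
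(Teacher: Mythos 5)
The paper offers no proof of this lemma: it is quoted directly from Smith and Stegenga \cite{ss90}, so there is no internal argument to compare against, and your proposal must stand on its own. Your backward direction does: the bound $k_\boz(x,y)\le 1$ for $y\in B(x,d(x,\boz^\complement)/2)$, the triangle inequality for $k_\boz$, and the resulting estimate $e^{\sz k_\boz(x_0,x)}\ls d(x,\boz^\complement)^{-n}$ give \eqref{e2.1} (and, as a byproduct, force $d(\cdot,\boz^\complement)$ to be bounded, so the logarithm on the right of \eqref{e2.1} can be made nonnegative). This is the standard easy half and is complete.

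The forward direction has a genuine gap, which you in fact locate precisely before papering over it. The reduction to $\int_\boz d(x,\boz^\complement)^{-\gz}\,dx<\fz$ for some small $\gz>0$ is correct, but the series $\sum_m c^m e^{-(n-\gz)m/C}$ diverges whenever the weak carrot constant $C$ is large, and the proposed remedy --- re-routing the Whitney chains along quasihyperbolic geodesics so as to ``replace $c$ by a much smaller constant'' --- cannot repair it. The quantity you must control, $\#\{Q:\,m(Q)=m\}$, is (up to dimensional constants) the number of Whitney cubes at quasihyperbolic distance comparable to $m$ from $x_0$; since the minimal chain length between two cubes is comparable to their quasihyperbolic distance no matter which chains are used, choosing geodesic chains changes neither the generation function nor the cardinality of its level sets. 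What you have actually reduced the lemma to --- $\sum_Q \ell(Q)^{n-\gz}<\fz$, equivalently that $\partial\boz$ has upper Minkowski dimension strictly less than $n$ --- is itself a nontrivial theorem about weak carrot (H\"older) domains, not a counting exercise; proving it is precisely the content of Smith and Stegenga's argument, which proceeds through a shadow/level-set decay estimate for Whitney chains (the sharp dimension bound was later obtained by Koskela and Rohde via mean porosity). As written, the hard implication is asserted rather than proved.
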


We also recall the notions of a separation property and slice property introduced in \cite{bk95,bk96}.

\begin{defn}\label{d2.3}
A domain $\boz$ has a separation property with respect to  $x_0\in\boz$ and $C>1$
if for every $x\in\boz$, there exists a curve $\gz:\,[0,\,1]\to \boz$ with $\gz(0)=x$,
$\gz(1)=x_0$, and such that for each $t\in(0,\,1]$, either
$\gz([0,\,t])\subset B\equiv B(\gz(t),\,Cd(\gz(t),\boz^\complement))$
or each $y\in\gz([0,\,t])\setminus B$ belongs to a different
component of $\boz\setminus \partial B$ than $x_0$.
\end{defn}

\begin{defn}\label{d2.4}
A domain $\boz$ has a slice property with respect to  $C>1$
if for every pair of points $x,\,y \in\boz$, there exists  a rectifiable curve $\gz:[0,\,1]\to\boz$ with
$\gz(0)=x$ and  $\gz(1)=y$, and pairwise disjoint collection of open subsets
$\{S_i\}_{i=0}^j$, $j\ge0$, of $\boz$ such that

(i) $x \in S_0$, $y\in S_j$ and $x $ and $y$ are in different components of $\boz\setminus \overline{S_i}$ for $0<i<j$;

(ii) if $F\subset\subset  \boz$ is a curve containing both $x$ and $y$, and $0<i<j$, then $\diam(S_i)\le C\ell(F\cap S_i)$;

(iii) for $0\le t\le 1$, $B(\gz(t),\,C^{-1}d(\gz(t),\,\boz^\complement))\subset\cup_{i=0}^jS_i$;

(iv) if $0\le i\le j$, then $\diam S_i\le C d(z,\,\boz^\complement)$ for all $z\in\gz_i\equiv\gz\cap S_i$;
also, there exists $x_i\in S_i$ such that $x_0=x$, $x_j=y$ and
$B(x_i,\,C^{-1}d(x_i,\,\boz^\complement))\subset S_i$.
\end{defn}

We point out that, as proved in \cite{bk95,bk96}, every simply connected domain in $\rr^2$ or
every domain in $\rr^n$ with $n \ge 3$ that is quasiconformally equivalent to a uniform domain satisfies a slice property
and a separation property. Every John domain satisfies both a separation and a slice property; see \cite{bo99}.

The following conclusion is essentially established in \cite{ks08} and
plays an important role in the proofs of Theorems \ref{t1.1}, \ref{t1.2}  and \ref{t1.3}.
For every $\rho>0$, similarly to $\cd_\ball^s(u)$, we denote by $\cd_\ball^{s,\,\rho}(u)$ the collection of
all measurable functions $g$ such that
 \eqref{e1.1} holds for all $x,\ y\in\boz\setminus E$
satisfying $|x-y|<\rho\dist(x,\,\partial\boz)$.
Notice that $\cd_\ball^s(u)= \cd_\ball^{s,\,1/2}(u)$ and
$\cd^s(u)= \cd_\ball^{s,\,\fz}(u)$.

\begin{lem}\label{l2.3}
Let $s\in(0,\,1]$ and $p\in(n/(n+s),\,\fz)$. Then $u\in\dot M^{s,\,p}_\ball(\boz)$ if and only if
there exists a $\rho\in(0,\,1)$ such that
$\inf_{g\in\cd^{s,\,\rho}_\ball(u)}\|g\|_{L^p(\boz)}<\fz$.  
Moreover, for given $\rho$,  there exists a positive constant $C$ such that
for all $u\in\dot M^{s,\,p}_\ball(\boz)$, 
$$C^{-1}\|u\|_{\dot M^{s,\,p}_\ball(\boz)}\le \inf_{g\in\cd^{s,\,\rho}_\ball(u)}\|g\|_{L^p(\boz)}\le C\|u\|_{\dot M^{s,\,p}_\ball(\boz)}.$$
\end{lem}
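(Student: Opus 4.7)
The plan is to show that for any $\rho_1,\rho_2\in(0,1)$ the quantities $\inf_{g\in\cd^{s,\rho_i}_\ball(u)}\|g\|_{L^p(\boz)}$ are comparable, with constants depending only on $\rho_1,\rho_2,s,p,n$; specializing to $\rho_2=1/2$ then recovers the lemma. The first observation is that $\cd^{s,\rho}_\ball(u)$ is decreasing in $\rho$, since a larger $\rho$ imposes \eqref{e1.1} on a strictly larger family of pairs. Hence if $\rho_1<\rho_2$ then $\cd^{s,\rho_2}_\ball(u)\subset\cd^{s,\rho_1}_\ball(u)$, and
$$\inf_{g\in\cd^{s,\rho_1}_\ball(u)}\|g\|_{L^p(\boz)}\le \inf_{g\in\cd^{s,\rho_2}_\ball(u)}\|g\|_{L^p(\boz)}$$
comes for free. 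This already gives one of the two inequalities in the lemma, regardless of which side of $1/2$ the given $\rho$ lies.

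The non-trivial reverse direction is established by a chaining argument. Given $g\in\cd^{s,\rho_1}_\ball(u)$ with $\|g\|_{L^p(\boz)}<\fz$, I would construct $\tilde g\in\cd^{s,\rho_2}_\ball(u)$ with $\|\tilde g\|_{L^p(\boz)}\ls \|g\|_{L^p(\boz)}$. Fix $x,y\in\boz$ with $|x-y|<\rho_2\dist(x,\partial\boz)$, set $r=|x-y|$, and subdivide the segment $[x,y]$ into $N$ equal pieces $z_i=x+i(y-x)/N$ for $0\le i\le N$, with $N$ an integer depending only on $\rho_1,\rho_2$ chosen large enough that for any $w_i\in B_i:=B(z_i,\varepsilon r/N)$ (with $\varepsilon$ small), every consecutive pair satisfies $|w_i-w_{i+1}|<\rho_1\dist(w_i,\partial\boz)$; this uses the elementary bound $\dist(w_i,\partial\boz)\ge(1-\rho_2)\dist(x,\partial\boz)-\varepsilon r/N$. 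Telescoping along the chain $(x,w_1,\ldots,w_{N-1},y)$ and applying \eqref{e1.1} to each link gives
$$|u(x)-u(y)|\ls r^s\sum_{i=0}^{N}g(w_i)$$
for almost every $(w_1,\ldots,w_{N-1})\in B_1\times\cdots\times B_{N-1}$, since the $\rho_1$-ball inequality fails only on a set of measure zero.

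The main obstacle is to convert this chain estimate into a pointwise Haj\l asz gradient in $L^p(\boz)$. When $p>1$ one simply averages each $w_i$ over $B_i$ and uses $\bint_{B_i}g\ls Mg(x)+Mg(y)$ (valid since each $B_i$ is contained in $B(x,2r)\cap B(y,2r)$), yielding $|u(x)-u(y)|\ls |x-y|^s[Mg(x)+Mg(y)]$; the Hardy--Littlewood maximal function $Mg$ then lies in $L^p(\boz)$ with the desired norm bound. For $p\in(n/(n+s),1]$ this fails since $M$ is unbounded on $L^p$, and one instead picks $t\in(n/(n+s),p)$ and uses the fractional maximal operator $M_tg:=(M(g^t))^{1/t}$, which is bounded on $L^p$ because $p/t>1$. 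Averaging is replaced by a Chebyshev-type selection: the set $\{w\in B_i: g(w)>C(\bint_{B_i}g^t)^{1/t}\}$ has measure less than $|B_i|/N$, so one may choose $w_i\in B_i$ simultaneously satisfying the $\rho_1$-ball condition and the pointwise bound $g(w_i)\ls M_tg(x)+M_tg(y)$. The chaining estimate then yields $|u(x)-u(y)|\ls |x-y|^s[M_tg(x)+M_tg(y)]$, and $\tilde g:=M_tg$ has $\|\tilde g\|_{L^p}\ls \|g\|_{L^p}$ as required.
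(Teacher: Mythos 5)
Your proposal is correct. Note that the paper does not actually prove Lemma \ref{l2.3}; it only remarks that the statement is ``essentially established'' in the Koskela--Saksman paper \cite{ks08}, so there is no in-paper argument to compare against. Your route --- monotonicity of $\cd^{s,\,\rho}_\ball(u)$ in $\rho$ for the easy inequality, and for the reverse a chain of $N\sim(\rho_1(1-\rho_2))^{-1}$ intermediate balls along the segment $[x,\,y]$, telescoping \eqref{e1.1} over consecutive links and then absorbing the intermediate values of $g$ into $Mg$ (for $p>1$) or into $M_tg=(M(g^t))^{1/t}$ with $t<p$ via a Chebyshev selection (for $p\le1$) --- is exactly the standard self-improvement argument used in \cite{ks08}, and all the delicate points are handled: the segment and the balls $B_i$ stay in $\boz$ because $|x-y|<\rho_2\dist(x,\,\partial\boz)$, the exceptional null set is avoided by Fubini since the $w_i$ range over sets of positive measure, and $\|M_tg\|_{L^p}\ls\|g\|_{L^p}$ because $p/t>1$. (The lower bound $t>n/(n+s)$ in your choice of $t$ is not actually needed for this lemma; any $t\in(0,\,p)$ works.)
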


We also need the following imbedding,
which is essentially established by Haj\l asz \cite[Theorem 8.7]{h03} when $n=1$ and
pointed out by Yang \cite{y03} when $s\in(0,\,1)$.

\begin{lem}\label{l2.4}
Let $s\in(0,\,1]$ and $p\in(n/(n+s),\,n/s)$.
Then for every $\sz>1$, there exists a positive $C$ constant such that for all balls or cubes $B$ and
$u\in\dot M^{s,\,p}(\sz B)$,
$$
 \|u -u_B\|_{L^{pn/(n-ps)}(B)}\le C\|u\|_{\dot M^{s,\,p}(\sz B)}.
$$
\end{lem}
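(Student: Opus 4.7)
The plan is a classical chaining-plus-fractional-integral argument, following Haj\l asz \cite{h03} and Yang \cite{y03}. By a simple rescaling I may reduce to $\sz=2$. Fix $u\in\dot M^{s,\,p}(2B)$ and an admissible $g\in\cd^s(u)$ on $2B$. The crux is the pointwise bound
$$|u(x)-u_B|\le C\int_{2B}\frac{g(y)}{|x-y|^{n-s}}\,dy+Cr^s g(x)\qquad\text{for a.e.\ }x\in B,$$
where $r$ is the radius of $B$. I would obtain this by picking a Lebesgue point $x\in B$, forming the dyadic chain $B_0=B$, $B_i=B(x,2^{1-i}r)\subset 2B$ for $i\ge 1$, and telescoping $u_B-u(x)=(u_{B_0}-u_{B_1})+\sum_{i\ge 1}(u_{B_i}-u_{B_{i+1}})$. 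Each gap is controlled via \eqref{e1.1} by $(\diam B_i)^s(g(x)+g_{B_i})$, and the geometric-series summation reorganizes the averages $g_{B_i}$ into the Riesz integral by the layer-cake comparison $(2^{1-i}r)^s g_{B_i}\ls\int_{B_i}g(y)|x-y|^{s-n}\,dy$.

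In the superlinear range $p\in(1,\,n/s)$, the argument finishes quickly. With $q=pn/(n-ps)$, the Hardy-Littlewood-Sobolev theorem yields $\|I_s g\|_{L^q(B)}\ls\|g\|_{L^p(2B)}$ for the Riesz potential $I_s g(x)\equiv\int_{2B}g(y)|x-y|^{s-n}\,dy$, while the pointwise remainder $r^s g$ is absorbed by H\"older, since $n(1/p-1/q)=s$ forces $r^s|B|^{1/q-1/p}=C$. Taking the infimum over $g\in\cd^s(u)$ on $2B$ then gives the lemma in this range.

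The main obstacle is the subcritical range $p\in(n/(n+s),\,1]$, where neither HLS nor the Hardy-Littlewood maximal function is bounded on $L^p$. Here I would invoke Haj\l asz's truncation device: set $v\equiv u-u_B$ and, for $k\in\zz$, define the level-cut $v_k\equiv\max\{-2^{k-1},\min\{v-2^{k-1}\mathrm{sgn}\,v,\,2^{k-1}\}\}$, so that $v=\sum_k v_k$ and each $v_k$ is essentially supported in $E_k\equiv\{2^{k-1}<|v|\le 2^{k+1}\}$. One verifies that $g\chi_{E_k}$ serves as an element of $\cd^s(v_k)$, applies the weak-type endpoint substitute of the pointwise bound to obtain $2^{kq}|\{|v|>2^k\}|\ls\lf(\int_{E_k}g^p\r)^{q/p}$, and sums over $k$; here the constraint $p>n/(n+s)$ is precisely the threshold making the level-set series finite after Minkowski-type reassembly. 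For the technical bookkeeping I would cite the proof of \cite[Theorem 8.7]{h03} (with the adjustment for $s\in(0,\,1)$ noted in \cite{y03}) rather than reproduce it in detail, since these exponent conditions are chosen to match exactly those of the quoted results.
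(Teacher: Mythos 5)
Your overall strategy is the standard one and is consistent with what the paper itself does: the paper offers no proof of Lemma \ref{l2.4} at all, merely citing Haj\l asz's Theorem 8.7 and Yang for exactly the chaining/Riesz-potential/truncation machinery you describe, so deferring the subcritical bookkeeping to those references is acceptable. However, two of your steps fail as written. The more serious one is the disposal of the remainder term $r^s g(x)$ in the range $p\in(1,n/s)$. You claim it is ``absorbed by H\"older'' because $r^s|B|^{1/q-1/p}=C$, but H\"older runs the other way on a set of finite measure: for $q=pn/(n-ps)>p$ one has $\|g\|_{L^p(B)}\le |B|^{1/p-1/q}\|g\|_{L^q(B)}$, and there is no bound of $\|g\|_{L^q(B)}$ by $\|g\|_{L^p(2B)}$ (take $g(y)=|y-x_B|^{-n/q}$, which lies in $L^p$ but not in $L^q$). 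So the term $r^sg$ cannot be estimated in $L^q$ by the data you have. The repair is that this term should never appear: telescoping all the way down to the Lebesgue point $x$, each gap $|u_{B_i}-u_{B_{i+1}}|$ is bounded by $(2^{-i}r)^s(g_{B_i}+g_{B_{i+1}})$ with no $g(x)$, and the reorganization you describe yields the clean bound $|u(x)-u_B|\le C\int_{2B}g(y)|x-y|^{s-n}\,dy$ a.e.\ on $B$, after which Hardy--Littlewood--Sobolev alone finishes the superlinear case.

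The second issue is the opening reduction ``by a simple rescaling I may reduce to $\sz=2$.'' A dilation maps $B(x_0,r)$ to $B(\lz x_0,\lz r)$ and $\sz B$ to $\sz(\lz B)$, so the ratio $\sz$ is scale-invariant and cannot be changed by rescaling. For $\sz\ge2$ the reduction is trivial (restrict an admissible $g$ from $\sz B$ to $2B$), but for $\sz\in(1,2)$ you must instead cover $B$ by boundedly many balls $B_j$ of radius comparable to $(\sz-1)r$ with $2B_j\subset\sz B$, apply the $\sz=2$ case on each, and chain the averages $u_{B_j}$ together; note also that your dyadic chain $B_i=B(x,2^{1-i}r)$ already requires $\sz\ge2$ to stay inside $\sz B$. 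Both defects are repairable without changing the architecture of the argument, but as written these two steps do not go through.
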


By Lemma \ref{l2.4}, we have the following conclusion.

\begin{lem}\label{l2.5}
 Let $s\in(0,\,1]$ and $p\in(n/(n+s),\,n/s)$.
Then a bounded  $\dot M^{s,\,p}_\ball$-extension domain always
supports a $(pn/(n-ps),\,p)_s$-{\rm HSP} imbedding.
\end{lem}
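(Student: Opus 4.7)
The plan is to deduce the global Haj\l asz-Sobolev-Poincar\'e inequality on $\boz$ from the ball inequality of Lemma \ref{l2.4}, by first extending $u$ to all of $\rn$ and then applying Lemma \ref{l2.4} on a ball that engulfs $\boz$. Since $\boz$ is a $\dot M^{s,\,p}_\ball$-extension domain, for any $u\in\dot M^{s,\,p}_\ball(\boz)$ there exists $v\in\dot M^{s,\,p}_\ball(\rn)$ such that $v|_\boz=u$ and
$$\|v\|_{\dot M^{s,\,p}_\ball(\rn)}\ls\|u\|_{\dot M^{s,\,p}_\ball(\boz)}.$$
Because $\rn$ is a uniform domain, the aforementioned equivalence $\dot M^{s,\,p}_\ball(\rn)=\dot M^{s,\,p}(\rn)$ (valid for $p\in(n/(n+s),\,\fz)$) upgrades this to $v\in\dot M^{s,\,p}(\rn)$ with comparable norm.

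Next, using the boundedness of $\boz$, I would fix $x_0\in\boz$ and set $B\equiv B(x_0,\diam\boz)$, so that $\boz\subset B$. Applying Lemma \ref{l2.4} to $v$ on $B$ (say with $\sz=2$) and writing $q\equiv pn/(n-ps)$ gives
$$\|v-v_B\|_{L^q(B)}\le C\|v\|_{\dot M^{s,\,p}(2B)}\le C\|v\|_{\dot M^{s,\,p}(\rn)}\ls\|u\|_{\dot M^{s,\,p}_\ball(\boz)}.$$
Since $v=u$ on $\boz\subset B$, restricting yields
$$\|u-v_B\|_{L^q(\boz)}\le\|v-v_B\|_{L^q(B)}\ls\|u\|_{\dot M^{s,\,p}_\ball(\boz)}.$$

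Finally, I would replace the constant $v_B$ by $u_\boz$ via the standard trick. By H\"older's inequality,
$$|u_\boz-v_B|=\lf|\frac1{|\boz|}\int_\boz(u-v_B)\,dx\r|\le|\boz|^{-1/q}\|u-v_B\|_{L^q(\boz)},$$
so $\|u_\boz-v_B\|_{L^q(\boz)}\le\|u-v_B\|_{L^q(\boz)}$, and the triangle inequality yields
$$\|u-u_\boz\|_{L^q(\boz)}\le2\|u-v_B\|_{L^q(\boz)}\ls\|u\|_{\dot M^{s,\,p}_\ball(\boz)},$$
which is \eqref{e1.2}.

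I do not anticipate any serious obstacle: the entire argument is a direct composition of the extension hypothesis with the ball-level inequality of Lemma \ref{l2.4}, together with a routine mean-value/H\"older adjustment to pass from $v_B$ to $u_\boz$. The only delicate point worth verifying explicitly is that the extension lands us in $\dot M^{s,\,p}(\rn)$ and not merely in $\dot M^{s,\,p}_\ball(\rn)$, which is precisely where the uniformity of $\rn$ (and hence the cited coincidence of the two spaces on $\rn$) enters.
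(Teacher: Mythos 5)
Your proof is correct and follows essentially the same route as the paper: extend $u$ to $v$ on $\rn$, apply the ball-level imbedding of Lemma \ref{l2.4} on a ball containing $\boz$, and then swap $v_B$ for $u_\boz$ by the standard mean-value/H\"older adjustment. If anything, you are more careful than the paper in justifying the passage from $\dot M^{s,\,p}_\ball(\rn)$ to $\dot M^{s,\,p}(\rn)$ (needed to invoke Lemma \ref{l2.4} as stated), which the paper glosses over.
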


\begin{proof}
Assume that
$\boz$ is an $\dot M^{s,\,p}_\ball$-extension domain.
Let $u\in\dot M^{s,\,p}_\ball(\boz)$.
Then there exists a $v\in\dot M^{s,\,p}_\ball(\rn)$ such that $v|_\boz=u$ and
$\|v\|_{\dot M^{s,\,p}_\ball(\rn)}\ls \|u\|_{\dot M^{s,\,p}_\ball(\boz)}$.
Let $B$ be a ball of $\rn$ such that $\boz\subset B $.
Then $v\in \dot M^{s,\,p}_\ball(2B)$ and thus by Lemma \ref{l2.3}, we have
$v\in L^{pn/(n-ps)}(B)$  and
$$\|v-v_B\|_{L^{pn/(n-ps)}(B)}\ls\|v\|_{\dot M^{s,\,p}_\ball(\rn)}\ls \|v\|_{\dot M^{s,\,p}_\ball(\boz)}$$
which further implies that
$$\|u-u_\boz\|_{L^{pn/(n-ps)}(\boz)}\le \|v-v_\boz\|_{L^{pn/(n-ps)}(B)}\ls\|v-v_B\|_{L^{pn/(n-ps)}(B)}\ls
  \|v\|_{\dot M^{s,\,p}_\ball(\boz)}.$$
This means that $\boz$  supports a $(pn/(n-ps),\,p)_s$-HSP imbedding
and thus finishes the proof of Lemma \ref{l2.5}.
\end{proof}

Finally, we state some conventions. Throughout the paper,
we denote by $C$ a positive constant which is independent
of the main parameters, but which may vary from line to line.
Constants with subscripts, such as $C_0$, do not change
in different occurrences. The symbol $A\ls B$ or $B\gs A$
means that $A\le CB$. If $A\ls B$ and $B\ls A$, we then
write $A\sim B$.
For any locally integrable function $f$,
we denote by $\bbint_E f  $ the average
of $f$ on $E$, namely, $\bbint_E f  \equiv\frac 1{|E|}\int_E f\,dx$.

\section{Proof of Theorem \ref{t1.1}}\label{s3}

\begin{proof}[Proof of Theorem \ref{t1.1}]
By Lemma \ref{l2.5},
it  suffices to prove that a domain which
supports a  $(pn/(n-ps),\,p)_s$-HSP imbedding
has the LLC(2) property.
Assume that $\boz$ is a bounded domain that supports a
$(pn/(n-ps),\,p)_s$-HSP imbedding.
We want to show that $\boz$ has the LLC(2) property.
To this end, let $L\equiv \diam\boz$  and $x_0\in \boz$ be such that
$r_0\equiv d(x_0,\,\boz^\complement)=\max\{d(x,\boz^\complement):\ x\in\boz\}$.
Notice that if $u(y)=0$ for all $y\in B(x_0,\,r_0)$,
then the $(pn/(n-ps),\,p)_s$-HSP imbedding implies that
\begin{equation}\label{e3.1}
 \|u\|_{L^{pn/(n-ps)}(\boz)}\ls \|u\|_{\dot M^{s,\,p}_\ball( \boz)},
\end{equation}
where the constant depends on $r_0$ and $|\boz|$ but not on $u$.

We  claim that
if $x,\,x_0\in\boz\setminus B(z,\,r)$ for $z\in B(x_0,\,2L)$ and $r\in(0,\,2L)$,
then $x,\,x_0$ are contained in the same component of
$\boz\setminus B(z,\,br)$ for some fixed constant $b\in(0,\,1)$,
which may depend on $\boz$ and $x_0$ but not on $z$ and $x$.

Assume that the above claim holds for the moment.
Then we deduce Theorem \ref{t1.1} from it by the following 2 steps.
Let $x,\,y\in\boz\setminus B(z,\,r)$ for $z\in\rn$ and $r\in(0,\,\fz)$.

{\it Step 1.}  There exists a positive constant $\wz b$ independent of $x$ such that
if $x,\,x_0\in\boz\setminus B(z,\,r)$, then $x,\,x_0$ are contained in the same component of
$\boz\setminus B(z,\,\wz br)$.
To see this, assume that $x_0\in\boz\setminus B(z,\,r)$.
If $z\notin B(x_0,\,2L)$,
then $\boz\cap B(z,\,r) \ne \emptyset$ implies that
$r\ge d(z,\,x_0)-L\ge r/2\ge L$, and moreover $\boz\setminus B(z,\,r)\ne\emptyset$ implies that
$\boz\cap B(z,\,r/2)=\emptyset$.
Thus if $x_0,\,x\in\boz\setminus B(z,\,r)$ with $d(z,\,x_0)\ge 2L$,
then $x_0,\,x$ are contained in the same component of
$\boz\setminus B(z,\,r)$ if $\boz\setminus B(z,\,r)=\emptyset$
or of $\boz\setminus B(z,\,r/2)$ if $\boz\setminus B(z,\,r)\ne\emptyset$.
If $z\in B(x_0,\,2L)$, then by the above claim, it suffices to consider the case $r\ge 2L$.
Since $r\ge 2L$ implies $d(z,\,\boz)\ge r-L\ge r/2$, which means that $\boz\cap B(z,\,r/2)=\emptyset$,
then $x_0,\,x$ are contained in the same component of $\boz\setminus B(z,\,r/2)$.

{\it Step 2.}  There exists a positive constant $b$ independent of $x,\,y$ such that
$x,\,y$ are contained in the same component of
$\boz\setminus B(z,\,br)$.
To see this, if $x_0\in \boz\setminus B(z,\,\frac{r_0}{10L}r)$, then
$x_0, x$ and $x_0,\,y$, and thus  $x,\,y$,  are contained in the same component of
$\boz\setminus B(z,\,\wz b\frac{r_0}{10L}r)$.
If $x_0\in B(z,\,\frac{r_0}{10L}r)$, then $r-\frac{r_0}{10L}r\le L$, which implies that
$r\le 2L$ and thus  $|z-x_0|\le \frac{r_0}{10L}r\le r_0/5$.
Obviously, $$B(z,\, \frac{r_0}{10L}r)\subset B(x_0,\, \frac{r_0}{5L}r)\subset B(x_0,\, r_0)\subset\boz,$$
which means that $\boz\setminus B(z,\, \frac{r_0}{10L}r)$ is connected,
and thus $x,\,y$ are contained in the same component of $ \boz \setminus B(z,\,\frac{r_0}{10L}r)$.

Therefore,  with the aid of  the above claim,
combining Step 1 and Step 2, we obtain Theorem \ref{t1.1}.
So we have  reduced Theorem \ref{t1.1} to the above claim.
The remainder of the proof of Theorem \ref{t1.1} consists of the proof of the above claim.

In the following argument, we let  $x\in\boz$, $z\in B(x_0,\,2L)$ and
 $r\in(0,\,2L)$ be fixed such that
 $x,\,x_0\in\boz\setminus B(z,\,r)$ as in the claim.
Let $b_z\in(0,\,1]$ be the supremum of $b\in(0,\,1)$
such that $x,\,x_0$ are contained in the same component of $\boz\setminus \overline{B(z,\,br)}$.
Without loss of generality, we assume that $b_z\le 1/10$.
Denote by $\boz_x$ the component of $\boz\setminus \overline{B(z,\,b_0r)}$ with $b_0=2b_z$  containing $x$.
Take $b_1\in(b_0,\,1]$ such that
$$
|\boz_x\cap (B(z,\,r)\setminus B(z,\,b_1r))|=\frac12 |\boz_x\cap (B(z,\,r)\setminus B(z,\,b_0r))|=
\frac12 |\boz_x\cap B(z,\,r)|.
$$
Define a function $u$ on $\boz$ by setting
\begin{equation}\label{e3.2}
u(y)\equiv\left\{\begin{array}{ll}
0, &  y\in\boz\setminus\boz_x;\\
\dfrac{d(y,\,B(z,\,b_0r))}{b_1r-b_0r}, \quad & y\in \boz_x\cap B(z,\,b_1r);\\
1, &y\in \boz_x\setminus B(z,\,b_1r).
\end{array}\right.
\end{equation}
Then we have the following conclusion, whose proof will be given below.

\begin{lem}\label{l3.1}
Let $u$ be as in \eqref{e3.2} and
$s\in(0,\,1]$. 
 Then $g\equiv C(b_1r-b_0r)^{-s}\chi_{\boz_x\cap B(z,\,r)}$
is  an element of $\cd^{s,\,1/8}_\ball(u)$, where $C$ is a positive constant independent of $u,\,x,\,b_0,\,b_1,\,r.$.
\end{lem}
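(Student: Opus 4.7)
The plan is to verify the defining inequality of $\cd^{s,\,1/8}_\ball(u)$ by a case analysis on where the pair $(y_1, y_2)$ lies. I would partition $\boz$ into four regions on which $u$ is piecewise defined: $A = \boz \setminus \boz_x$ (where $u = 0$ and $g = 0$), $B = \boz_x \cap B(z, b_1 r)$ (where $u(y) = d(y, B(z, b_0 r))/(b_1 r - b_0 r)$ takes values in $[0, 1]$ and $g$ is positive), $C_1 = \boz_x \cap (B(z, r) \setminus B(z, b_1 r))$ (where $u \equiv 1$ and $g$ is positive), and $C_2 = \boz_x \setminus B(z, r)$ (where $u \equiv 1$ but $g = 0$). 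Throughout I would freely use that the proximity assumption $|y_1 - y_2| < \tfrac{1}{8}\dist(y_1, \partial\boz)$ forces $[y_1, y_2] \subset \boz$.

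Before the case analysis, I would record the key geometric observation $\dist(z, \partial\boz) \le b_0 r$. Indeed, if $\dist(z, \partial\boz) > b_0 r$, then $\overline{B(z, b_0 r)}$ would sit strictly interior to the connected open set $\boz \subset \rn$ with $n \ge 2$, so $\boz \setminus \overline{B(z, b_0 r)}$ would remain connected; this would force $\boz_x = \boz \setminus \overline{B(z, b_0 r)}$ and hence contain $x_0$, contradicting $b_0 = 2 b_z > b_z$. A consequence I would use repeatedly is that $\partial \boz_x \cap \boz \subset \partial B(z, b_0 r)$, so any segment in $\boz$ joining $\boz_x$ to $\boz \setminus \boz_x$ must cross the sphere $\partial B(z, b_0 r)$ at some point $w$.

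I would then dispose of the routine cases. The diagonal pairs $(A, A)$, $(C_1, C_1)$, $(C_2, C_2)$, $(C_1, C_2)$ are trivial since $u$ is locally constant. For $(B, B)$ I would use the $(b_1 r - b_0 r)^{-1}$-Lipschitz property of $u$ on $B$ together with $|u| \le 1$, distinguished by whether $|y_1 - y_2|$ exceeds $b_1 r - b_0 r$. For the mixed pairs $(A, B)$ and $(A, C_1)$ I would apply the segment-crossing observation at $w$ to get $|y_1 - y_2| \ge |y_2 - z| - b_0 r$; in $(A, B)$ this yields $|u(y_1) - u(y_2)| = u(y_2) \le |y_1 - y_2|/(b_1 r - b_0 r)$, and in $(A, C_1)$ it yields $|y_1 - y_2| \ge b_1 r - b_0 r$ and hence $|u(y_1) - u(y_2)| = 1 \le |y_1 - y_2|^s (b_1 r - b_0 r)^{-s}$. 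The pairs $(B, C_1)$ and $(B, C_2)$ are handled by the same Lipschitz-plus-reverse-triangle bookkeeping together with $g(y_1) > 0$ and the bound $b_1 \le 1$.

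The main obstacle will be the pair $(A, C_2)$, where $g(y_1) = g(y_2) = 0$ but $|u(y_1) - u(y_2)| = 1$. My plan is to rule this case out entirely. Taking the crossing point $w$ and combining $\dist(w, \partial\boz) \le |w - z| + \dist(z, \partial\boz) \le 2 b_0 r$ with $|y_1 - w| \le |y_1 - y_2| < \tfrac{1}{8}\dist(y_1, \partial\boz)$ and the triangle inequality $\dist(y_1, \partial\boz) \le |y_1 - w| + \dist(w, \partial\boz)$, I would conclude $\dist(y_1, \partial\boz) \le \tfrac{16}{7} b_0 r$ and hence $|y_1 - y_2| < \tfrac{2}{7} b_0 r \le \tfrac{2}{35} r$. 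On the other hand, the geometry forces $|y_1 - y_2| \ge (1 - b_0) r \ge \tfrac{4}{5} r$ (using $b_0 \le \tfrac{1}{5}$), a contradiction. The symmetric ordering using $\dist(y_2, \partial\boz)$ is handled identically after swapping $y_1$ and $y_2$, and choosing $C \ge 1$ makes all constant checks pass uniformly in the data.
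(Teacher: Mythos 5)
Your proposal is correct and follows essentially the same route as the paper: a case analysis over the regions where $u$ is piecewise constant or Lipschitz, direct verification of the pointwise inequality wherever $g$ is positive at one of the two points, and a proof that the only dangerous pair (both points where $g$ vanishes but $u$ jumps) cannot satisfy the proximity condition $|y_1-y_2|<\tfrac18\dist(y_1,\partial\Omega)$, resting on the same key observation $\dist(z,\partial\Omega)\le b_0r$ that the paper uses implicitly. The only difference is cosmetic: you exclude the bad pairs via a crossing point of the segment $[y_1,y_2]$ with $\partial B(z,b_0r)$ and a quantitative contradiction, whereas the paper argues via the connectedness of $B(y,2|w-y|)\setminus\overline{B(z,b_0r)}$; both deliver the same conclusion.
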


By Lemma \ref{l2.3}, Lemma \ref{l3.1} and \eqref{e3.1}, we further have $u\in \dot M^{s,\,p}_\ball( \boz)$ and
$$\|u\|_{L^{pn/(n-ps)}(\boz)}\ls \|u\|_{\dot M^{s,\,p}_\ball( \boz)}\ls
\|g\|_{L^{p}(\boz)}\ls (b_1r-b_0r)^{-s}|\boz_x\cap (B(z,\,r)\setminus B(z,\,b_0r))|^{1/p},$$
which together with
\begin{eqnarray*}
\|u\|_{L^{pn/(n-ps)}(\boz)}&&\gs |\boz_x\cap (B(z,\,r)\setminus B(z,\,b_1r))|^{(n-ps)/pn}\\
&& \gs
|\boz_x\cap (B(z,\,r)\setminus B(z,\,b_0r))|^{(n-ps)/pn}
\end{eqnarray*}
implies that
\begin{equation}\label{e3.3}
b_1r-b_0r\ls |\boz_x\cap (B(z,\,r)\setminus B(z,\,b_0r))|^{1/n}.
\end{equation}
Hence, if $b_1\ge 1/2$, then  \eqref{e3.3} implies that
\begin{equation}\label{e3.4}
(1/2-b_0)r \ls |\boz_x\cap (B(z,\,r)\setminus B(z,\,b_0r))|^{1/n}.
\end{equation}
If $b_1<1/2$, then following the above procedure, we can find a sequence $\{b_j\}_{j=1}^{j_0}$ such that
$b_{j_0}\ge 1/2$ and for all $0\le j\le j_0-1$, $b_j<1/2$,
$$|\boz_x\cap (B(z,\,r)\setminus B(z,\,b_{j+1}r))|
=\frac12|\boz_x\cap (B(z,\,r)\setminus B(z,\,b_jr))|,
$$
and
$$b_{j+1}r-b_jr\ls |\boz_x\cap (B(z,\,r)\setminus B(z,\,b_jr))|^{1/n}.$$
This implies that
$$ \sum_{j=0}^{j_0-1}(b_{j+1}r-b_jr)
\ls |\boz_x\cap (B(z,\,r)\setminus B(z,\,b_0r))|^{1/n},
$$
and hence   \eqref{e3.4}.
To control $|\boz_x\cap (B(z,\,r)\setminus B(z,\,b_0r))|^{1/n}$ via $b_0r$, define function
\begin{equation}\label{e3.5}
v(y)\equiv\inf_{\gz(x_0,\,y)}\ell(\gz\cap B(z,\,b_0r))
\end{equation}
for all $y\in\boz$, where the infimum is taken over all the rectifiable curves $\gz$ joining $x_0$ and $y$ in $\boz$.
Observe that for all $y$ in the component of $\boz\setminus \overline{B(z,\,b_0r)}$ containing $x_0$,
$v(y)=0$;  for all $y$ in the component $\boz_x\setminus \overline{B(z,\,b_0r)}$
which contains $x$ and does not contain $x_0$,
$v(y)$ is a constant larger than or equal to $b_0r$.
Moreover,  we have the following conclusion, whose proof will be given below.

\begin{lem}\label{l3.2}
Let $v$ be as in \eqref{e3.5} and $s\in(0,\,1]$.
 Then
$h\equiv C(b_0r)^{1-s}\chi_{\boz\cap B(z,\,b_0r)}$ is   an element of $\cd^{s,\,1/8}_\ball(v)$,
where $C$ is a positive constant independent of $v,\,z,\,b_0,\,r$.
\end{lem}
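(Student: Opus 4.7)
The plan is to establish the pointwise Haj\l asz-type inequality
$|v(x) - v(y)| \le |x-y|^s [h(x) + h(y)]$
for all $x, y \in \boz$ with $|x-y| < \frac{1}{8} d(x, \partial\boz)$, with a suitable absolute constant $C$. Under this smallness condition $B(x, 2|x-y|) \subset \boz$, so any short curve in this ball is admissible. A triangle-inequality argument for $v$ then yields
\[
|v(x) - v(y)| \le \inf_{\gamma} \ell(\gamma \cap B(z, b_0 r)),
\]
where the infimum is over rectifiable curves $\gamma \subset \boz$ joining $x$ to $y$: concatenating any such $\gamma$ with a near-optimal curve realizing $v(x)$ produces a curve from $x_0$ to $y$, and likewise with the roles of $x,y$ reversed.

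I would then split into two cases. If at least one of $x, y$ lies in $B(z, b_0 r)$, I take $\gamma = [x, y]$, the line segment, which is in $\boz$ by the smallness assumption. The length $\ell([x,y] \cap B(z, b_0 r))$ is bounded by $\min(|x-y|, 2 b_0 r) \le |x-y|^s (2 b_0 r)^{1-s}$ via the elementary geometric-mean interpolation, and since $h(x) + h(y) \ge C (b_0 r)^{1-s}$ in this case the claim follows for any $C \ge 2$. When both $x, y$ lie in $\boz \setminus B(z, b_0 r)$ we have $h(x) + h(y) = 0$, so we must instead show $v(x) = v(y)$. Since the segment $[x, y]$ may still cross $B(z, b_0 r)$, I invoke the geometric fact that for $n \ge 2$ the set $B(x, 2|x-y|) \setminus B(z, b_0 r)$ is path-connected whenever $B(x, 2|x-y|) \not\subset B(z, b_0 r)$, which is automatic here since $x$ belongs to the former but not the latter: the difference of two open balls in $\rn$ never disconnects in dimension at least two. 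Thus $x$ and $y$ can be joined by a rectifiable curve $\gamma$ in $B(x, 2|x-y|) \setminus B(z, b_0 r) \subset \boz$, necessarily satisfying $\gamma \cap B(z, b_0 r) = \emptyset$, which gives $|v(x)-v(y)| = 0$.

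The main obstacle is verifying this path-connectedness rigorously, especially when one of $x, y$ lies on $\partial B(z, b_0 r)$, where the open interior $B(x, 2|x-y|) \setminus \overline{B(z, b_0 r)}$ no longer contains that point. I would handle this either by absorbing the $n$-dimensional measure-zero set $\partial B(z, b_0 r) \cap \boz$ into the Haj\l asz exceptional set $E$, or by connecting such a boundary point into the open interior via a short outward-radial segment from $z$, which remains in $\boz$ by the smallness of $|x-y|$ relative to $d(x, \partial\boz)$. The constant $C$ emerging from the proof is then absolute (say $C = 2$), independent of $v, z, b_0, r$ as claimed.
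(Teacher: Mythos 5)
Your proposal is correct and uses essentially the same two ingredients as the paper's proof: the concatenation/triangle inequality for $v$ combined with the line-segment bound $\ell([x,y]\cap B(z,b_0r))\le\min\{|x-y|,2b_0r\}\le|x-y|^s(2b_0r)^{1-s}$ when one point lies in the ball, and the connectedness of a ball minus a ball in dimension $n\ge2$ to conclude $v(x)=v(y)$ when both points lie outside it. The only difference is organizational: the paper phrases the second case as ``points in different components of $\boz\setminus\overline{B(z,b_0r)}$ violate the $1/8$-closeness condition and so need not be checked,'' whereas you run the same connectivity argument forward to show close points are in the same component; these are logically equivalent.
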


By Lemma \ref{l2.3}, Lemma \ref{l3.2} and \eqref{e3.1}, we have that $v\in \dot M^{s,\,p}_\ball(\boz)$ and
$$(b_0r)|\boz_x\cap  (B(z,\,r)\setminus B(z,\,b_0r) )|^{(n-ps)/pn}\ls|\boz\cap B(z,\,b_0r)|^{1/p}(b_0r)^{1-s}$$
which implies that $$ |\boz_x\cap   (B(z,\,r)\setminus B(z,\,b_0r) ) | \ls (b_0r)^n.$$
By this and \eqref{e3.4}, we have
$(1/2-b_0)r\ls b_0r$, which implies that
$b_0\ge C$ for some fixed constant $C\in(0,\,1)$ independent of $x$.
This gives the above claim by taking $b=C/4$
and thus finishes the proof of Theorem \ref{t1.1}.
\end{proof}

\begin{proof}[Proof of Lemma \ref{l3.1}]
It suffices to check that for every pair of
 $y,\,w\in\boz$ such that $|y-w|<\dist(y,\,\boz^\complement)/8$,
\begin{equation}\label{e3.6}
 |u(y)-u(w) |\ls \frac{|y-w|^s}{(b_1r-b_0r)^{s}}[\chi_{\boz_x\cap B(z,\,r)}(y)+ \chi_{\boz_x\cap B(z,\,r)}(w)].
\end{equation}
To prove \eqref{e3.6}, without loss of generality, we may assume that $u(w)<u(y)$.
Then $u(y)>0$ implies that $y\in \boz_x$ and $u(w)<1$ implies that $w\notin\boz_x\setminus B(z,\,b_1r)$.
We will consider the following three cases for $w$:
i) $w\in \boz_x\cap  B(z,\,b_1r)$; ii) $w\in \boz\cap \overline{ B(z,\,b_0r)}$;
iii) $w\in\boz\setminus(\boz_x\cup \overline {B(z,\,b_0r))}$.

{\it Case i).}
If $y\in \boz_x\setminus B(z,\,b_1r)$, then by $w\in \boz_x\cap  B(z,\,b_1r)$,
we have
$$d(w,\,B(z,\,b_0r))=|w-z|-b_0r\ge |z-y|-|w-y|-b_0r\ge (b_1r-b_0r)-|w,\,y|,$$
and thus  $$|u(y)-u(w)|=\lf|1-\frac{d(w,\,B(z,\,b_0r))}{b_1r-b_0r}\r|\le
\lf|1-\frac{d(w,\,B(z,\,b_0r))}{b_1r-b_0r}\r|^s\le
\frac{|w-y|^s}{(b_1r-b_0r)^s},$$
 which gives \eqref{e3.6}.
If $y\in\boz_x\cap B(z,\,b_1r) $, then by $|w-y|\le b_1r-b-0r $,
$$|u(y)-u(w)|=\lf| \frac{d(y,\,B(z,\,b_0r))-d(w,\,B(z,\,b_0r))}{b_1r-b_0r}\r|\le \frac{|w-y|}{b_1r-b_0r}
\le\frac{|w-y|^s}{(b_1r-b_0r)^s},$$
 which gives \eqref{e3.6}.

{\it Case ii).} If $y\in \boz_x\cap B(z,\,r)$,
then $d(y,\,B(z,\,b_0r))\le |y-w|\le b_1r-b_0r$ and thus
$$|u(y)-u(w)|=\lf|\min\lf\{1,\,\frac{d(y,\,B(z,\,b_0r))}{b_1r-b_0r}\r\}\r|\le \lf|\min\lf\{1,\,\frac{|w-y|}{b_1r-b_0r}\r\}\r|^s
\le\frac{|w-y|^s}{(b_1r-b_0r)^s},$$
 which gives \eqref{e3.6}.
If $y\in \boz_x\setminus B(z,\,r)$, then $|w-y|\ge (1-b_0)r\ge b_0r$.
Since $d(w,\,\boz^\complement)\le |w-z|+d(z,\,\boz^\complement)\le 2b_0r$,
so we have that  $|w-y|\ge 2 d(w,\,\boz^\complement)$.
Moreover, since $d(y,\,\boz^\complement)\le |y-w|+d(w,\,\boz^\complement)\le 2|y-w|$,
 by the definition of $\cd^{s,\,1/8}_\ball(u)$,
we do not need to check \eqref{e3.6} for $y\in \boz_x\setminus B(z,\,r)$.

{\it Case iii).}
 We will prove that in this case,
  \begin{equation}\label{e3.7}
|y-w|\ge\frac18\max\{ d(y,\,\boz^\complement),\,  d(w,\,\boz^\complement)\}.
\end{equation}
 Thus, we do not need to check \eqref{e3.6} by the definition of $\cd^{s,\,1/8}_\ball (u)$.
To prove \eqref{e3.7}, notice that $y\in\boz_x$ and $w\notin\boz_x\cup \overline {B(z,\,b_0r)}$ implies that
 $y$ and $w$ are in different components of $\boz\setminus  \overline {B(z,\,b_0r)}$.
If $|y-w|< d(y,\,\boz^\complement)/4$,
then  $B(y,\,2|w-y|)\subset B(y,\,d(y,\,\boz^\complement))\subset\boz$.
Observe that either $B(y,\,2|w-y|)\setminus \overline{B(z,\,b_0r)}=\emptyset$  or
$B(y,\,2|w-y|)\setminus \overline{B(z,\,b_0r)}$ is connected.
So  $y,\,w \in  B(y,\,2|w-y|)\setminus \overline {B(z,\,b_0r)}\subset\boz$ means that
$B(y,\,2|w-y|)\setminus \overline {B(z,\,b_0r)}$ are connected and thus
$y,\,w $ are in the same component of $\boz\setminus \overline {B(z,\,b_0r)}$,
which is a contradiction.
If $|y-w|< d(y,\,\boz^\complement)/8$, then for all $z\ne\boz$,
$$d(z,\,y)\ge d(z,\,w)-d(w,\,y)\ge d(w,\,\boz^\complement)/2,$$  which implies that
$d(y,\,\boz^\complement)\ge 7d( w,\,\boz^\complement)/8$ and thus
$|y-w|< d(y,\,\boz^\complement)/4$.
Thus, \eqref{e3.7} holds. This finishes the proof of Lemma \ref{l3.1}.
\end{proof}

\begin{proof}[Proof of Lemma \ref{l3.2}]
This is quite similar to the proof of Lemma \ref{l3.1}.
We sketch the proof.
It suffices to check that for every pair of
 $y,\,w\in\boz$ such that $|y-w|<\dist(y,\,\boz^\complement)/8$,
\begin{equation}\label{e3.8}
 |v(y)-v(w) |\ls (b_0r)^{1-s} [\chi_{\boz_x\cap B(z,\,b_0r)}(y)+ \chi_{\boz_x\cap B(z,\,b_0r)}(w)].
\end{equation}
If both $y$ and $w$  are in the same component of $\boz\setminus \overline{B(z,\,b_0r)}$, then \eqref{e3.8} holds.
If $y,\,w$ are in different components of $\boz\setminus\overline{ B(z,\,b_0, r)}$,
by an argument similar to that of \eqref{e3.7}, we can prove that
\eqref{e3.7} still holds, and thus we do not need to check \eqref{e3.8} for such $y,\,w$.
So we can assume that one of  $w,\,y$ is in $\boz\cap\overline{ B(z,\,b_0r)}$.
Notice that in this case $I(w,\,y)\subset\boz$, where $I (w,\,y)$ denotes the line segment joining $w$ and $y$.
Then
$$
 |u(y)-u(w) |\le \ell(I(w,\,y)\cap B(z,\,b_0r))\le \min\{2b_0r,\,|y-w|\}\ls (b_0r)^{1-s}|y-w|^s,
$$
which implies \eqref{e3.8}.
This finishes the proof of
Lemma \ref{l3.2}.
\end{proof}

\section{Proofs  of Theorems \ref{t1.2} and \ref{t1.3}}\label{s4}

\begin{proof} [Proof of Theorem \ref{t1.2}]
(i) Assume that $\boz$ is a bounded John domain.
Then, as proved by Boman \cite{b},
$\boz$ enjoys the following chain property:
there exist a positive constant $ \wz C$ and
a sequence of subcubes of $\boz$, which is denoted by $\cf$, such that

(a) $\chi_\boz(x)\le\sum_{j} \chi_{Q_i}(x)\le\sum_{j} \chi_{2Q_i}(x)\le \wz C\chi_\boz(x)$ for all $x\in\rn$;

(b) for a fixed subcube $Q_0\in\cf$ and any other $Q\in\cf$,
there exists a subsequence $\{Q_j\}_{j=1}^N\subset\cf$ satisfying that
$Q=Q_N\subset \wz CQ_j$,
$\wz C^{-1}|Q_{j+1}|\le |Q_j|\le \wz C|Q_{j+1}|$ and
$|Q_j\cap Q_{j+1}|\ge \wz C^{-1}\min\{|Q_j|,\,|Q_{j+1}|\}$
for all $j=0,\,\cdots,\,N-1$.

Let $u\in \dot M^{s,\,p}_\ball(\boz)$ and $g\in \cd^s_\ball(u)$
with $\|g\|_{L^p(\boz)}\ls\|u\|_{\dot M^{s,\,p}(\boz)}$. Then
\begin{eqnarray*}
\int_\boz |u(z)-u_{Q_0}|^{pn/(n-ps)}\,dz&&\ls\sum_{Q\in\cf}\int_Q|u(z)-u_Q|^{pn/(n-ps)}\,dz
+\sum_{Q\in\cf}|Q||u_Q-u_{Q_0}|^{pn/(n-ps)}\\
&&\equiv I_1+I_2.
\end{eqnarray*}
Then by Lemma \ref{l2.4}, $n/(n-ps)> 1$ and the above chain property, we  have
\begin{eqnarray*}
I_1&&\ls\sum_{Q\in\cf}\lf(\int_{2Q}[g(z)]^p\,dz\r)^{n/(n-ps)}\\
&&\ls \lf(\sum_{Q\in\cf}\int_{2Q}[g(z)]^p\,dz\r)^{n/(n-ps)}
\ls \lf(\int_\boz [g(z)]^p\,dz\r)^{n/(n-ps)}
\ls \|u\|_{\dot M^{s,\,p}_\ball(\boz)}^{pn/(n-ps)}.
\end{eqnarray*}

To estimate $I_2$,  for every $Q\in\cf$,
let $\{Q_j\}_{j=1}^N$ be as in (b).
Then we have
\begin{eqnarray*}
|u_Q-u_{Q_0}|
&&\ls \sum_{j=0}^{N-1}(|u_{Q_j}-u_{Q_j\cap Q_{j+1}}|+|u_{Q_j\cap Q_{j+1}}-u_{Q_{j+1}}|)\\
&&\ls \sum_{j=0}^{N}\bint_{Q_j}|u(z)-u_{Q_j}|\,dz \\
&&\ls \sum_{j=0}^{N}|Q_j|^{s/n}\lf(\bint_{2Q_j}[g(z)]^p\,dz\r)^{1/p}\\
&&\ls \sum_{\wz Q\in\cf:\, Q\subset 2\wz Q}|\wz Q|^{s/n}\lf(\bint_{2\wz Q}[g(z)]^p\,dz\r)^{1/p}\\
\end{eqnarray*}
Thus, by $Q\subset \wz C\wz Q$ for all $\wz Q\in\cf$,
we obtain
\begin{eqnarray*}
I_2&&\ls \sum_{Q\in\cf} |Q|
\lf\{\sum_{\wz Q\in\cf:\, Q\subset\wz C\wz Q}|\wz Q|^{s/n}\lf(\bint_{2\wz Q}[g(z)]^p\,dz\r)^{1/p}\r\}^{pn/(n-ps)}\\
&&\ls\sum_{Q\in\cf} \int_Q
\lf\{\sum_{\wz Q\in\cf}|\wz Q|^{s/n}\lf(\bint_{2\wz Q}[g(z)]^p\,dz\r)^{1/p}\chi_{\wz C\wz Q}(x)\r\}^{pn/(n-ps)}\,dx\\
&&\ls\int_\boz
\lf\{\sum_{\wz Q\in\cf}\lf[\cm\lf(\lf[|\wz Q|^{s/n}\lf(\bint_{2\wz Q}[g(z)]^p\,dz\r)^{1/p}\chi_{\wz Q}\r]^{1/2}\r)(x)\r]^2\r\}^{pn/(n-ps)}\,dx,
\end{eqnarray*}
where  $\cm$ denotes the Hardy-Littlewood maximal operator.
Then  by the vector-valued inequality of $\cm$ (see, for example, \cite{s93}), we have
\begin{eqnarray*}
I_2&&\ls\int_\boz
\lf\{\sum_{\wz Q\in\cf}|\wz Q|^{s/n}\lf(\bint_{2\wz Q}[g(z)]^p\,dz\r)^{1/p}\chi_{\wz Q}(x)\r\}^{pn/(n-ps)}\,dx\\
&&\ls \lf(\sum_{\wz Q\in\cf}\int_{2\wz Q}[g(z)]^p\,dz\r)^{n/(n-ps)}\ls \lf(\int_\boz[g(z)]^p\,dz\r)^{n/(n-ps)}\ls \|u\|_{\dot M^{s,\,p}_\ball(\boz)}^{pn/(n-ps)}.
\end{eqnarray*}
This estimate finishes the proof of Theorem \ref{t1.2}(i).

(ii) Assume that $\boz$ is bounded domain and has a separation property with respect to $x_0\in\boz$
and   $C_0\ge1$. For any fixed point $x\in\boz$, let $\gz$ be a curve as
in Definition \ref{d2.3}.
We claim that
$d(\gz(t),\,\boz^\complement)\gs \diam\gz([0,\,t])$ for all $t\in[0,\,1]$.
Assume this claim holds for  the moment.
Then, as pointed out in \cite{bk95},
even though the claim is not enough to ensure that $\gz$ is a John curve for $x$,
it is known that the claim is enough to guarantee that $\gz$ can be
modified to yield a John curve for $x$ by the arguments in \cite[pp.\,385-386]{ms79}
and \cite[pp.\,7-8]{nv91}.

To prove the above claim, let $N=2+C_0/b$,
where $b$ is the constant for which LLC(2) holds.
For $t\in(0,\,1]$,
if $d(\gz(t),\,\boz^\complement)\ge d(x_0,\,\boz^\complement)/N$,
then $$\gz([0,\,t])\subset\boz\subset B\lf(\gz(t),\, \frac{N\diam\boz}{d(x_0,\,\boz^\complement)}d(\gz(t),\,\boz^\complement)\r),$$  which implies the above claim.
Assume that $d(\gz(t),\,\boz^\complement)<d(x_0,\,\boz^\complement)/N$.
Now it suffices to prove  that $\gz([0,\,t])\subset B(\gz(t),\,(N-1)d(\gz(t),\,\boz^\complement))$.
To this end, if  $y\in\gz([0,\,t])\setminus B(\gz(t),\,(N-1)d(\gz(t),\,\boz^\complement))$,
since $d(x_0,\,\gz(t))\ge(N-1)d(\gz(t),\,\boz^\complement)$,
then by Theorem \ref{t1.1}(iii), we know that $x_0$ and $y$ are contained in the same component of
$\boz\setminus B(\gz(t),\,b(N-1)d(\gz(t),\,\boz^\complement))$,
By $b(N-1)> C_0$, we further know that $x_0$ and $y$ are in the same component of
$\boz\setminus \partial B(\gz(t),\,C_0d(\gz(t),\,\boz^\complement))$,
which is a contradiction with the separation property.
This verifies the above claim and thus finishes the proof of Theorem \ref{t1.2}(ii).
\end{proof}

To prove Theorem \ref{t1.3}, we first establish the following result, which is an improvement on
\cite[Theorem 8.7(ii)]{h03}.

\begin{lem}\label{l4.1} Let $s\in(0,\,1]$.
Then there exist positive constants $0<C_1<1<C_2$ such that  for all balls $B\subset\rn$ and
$u\in \dot M^{s,\,n/s}(4B)$,
\begin{equation}\label{e4.1}
 \bint_B\exp\lf(C_1\frac{|u(x)-u_B|}{\|u\|_{\dot M^{s,\,n/s}(4B)}}\r)^{n/(n-s)}\,dx\le C_2.
\end{equation}
\end{lem}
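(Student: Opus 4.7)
The plan is to reduce the Trudinger-type bound \eqref{e4.1} to sharp $L^q$ Poincar\'e estimates on $B$ with explicitly tracked $q$-growth, and then expand the exponential as a power series. Set $\alpha:=n/(n-s)$ and normalize so that $\|u\|_{\dot M^{s,\,n/s}(4B)}=1$. First I would apply Lemma \ref{l2.4} with $\sigma=4$: for every $p\in(n/(n+s),\,n/s)$,
$\|u-u_B\|_{L^{pn/(n-ps)}(B)}\le C_0(p)\|u\|_{\dot M^{s,\,p}(4B)}$.
Since $|4B|<\fz$, H\"older's inequality applied to any $g\in\cd^s(u)|_{4B}$ gives $\|u\|_{\dot M^{s,\,p}(4B)}\ls |4B|^{1/p-s/n}\|u\|_{\dot M^{s,\,n/s}(4B)}$. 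Setting $q:=pn/(n-ps)$ (so that $1/p-s/n=1/q$) and dividing through by $|B|^{1/q}$, I obtain
$$
\lf(\bint_B |u(x)-u_B|^q\,dx\r)^{1/q}\le C\,C_0(p)\,\|u\|_{\dot M^{s,\,n/s}(4B)}.
$$

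The next step, which I expect to be the main obstacle, is tracking the precise dependence of $C_0(p)$ on $p$, namely $C_0(p)\ls q^{(n-s)/n}$ as $p\uparrow n/s$. Lemma \ref{l2.4} in its stated form only asserts the existence of the constant, so I would reopen its proof: combining the pointwise estimate $|u(x)-u_B|\ls r^s[\cm(g^p\chi_{4B})(x)]^{1/p}$ (equivalently, the truncated Riesz-type potential representation of Haj\l asz-Sobolev differences) with Hedberg's splitting trick and optimizing the splitting radius as $q$ grows extracts the explicit $q^{(n-s)/n}$ factor. This is precisely the sharp growth responsible for the Trudinger exponent $n/(n-s)$, and is the heart of the whole argument; all remaining steps are routine once it is in hand.

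Granted this growth, Taylor expansion with $q=k\alpha$ gives
$$
\bint_B \exp(C_1|u(x)-u_B|)^\alpha\,dx=\sum_{k=0}^\fz\frac{C_1^{k\alpha}}{k!}\bint_B|u(x)-u_B|^{k\alpha}\,dx\le \sum_{k=0}^\fz\frac{(C_1C)^{k\alpha}(k\alpha)^k}{k!}.
$$
Stirling's inequality $k!\gs(k/e)^k$ bounds the $k$-th term by $[(C_1C)^\alpha\alpha e]^k$, so choosing $C_1>0$ small enough that $(C_1C)^\alpha\alpha e<1/2$ makes the geometric series sum to an absolute constant $C_2$, which yields \eqref{e4.1}. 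The reason the lemma is an improvement over Haj\l asz's Theorem 8.7(ii) is that the averaging ball on the left and the $\dot M^{s,\,n/s}$ ball on the right differ by a fixed dilation factor 4, rather than requiring passage to a much larger enlargement, which matters when applying \eqref{e4.1} in the proof of Theorem \ref{t1.3}(i) along a chain of Whitney-type cubes.
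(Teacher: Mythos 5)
Your outline is correct and follows essentially the same strategy as the paper: normalize, establish an $L^q$ Poincar\'e inequality on $B$ whose constant grows like $q^{(n-s)/n}$, and sum the exponential series using Stirling's bound. The one point of divergence is precisely the step you flag as the heart of the argument and leave as a sketch. The paper does not reopen Lemma \ref{l2.4}; instead it telescopes over dyadic balls $B(x,2^{-j})$ and uses the defining inequality \eqref{e1.1} to obtain the pointwise bound $|u(x)-u_B|\ls\int_{4B}\cm(g)(y)|x-y|^{s-n}\,dy$ (note that the maximal function of $g$, not $g$ itself, appears, since the telescoping produces averages of $g$ over balls centered at $x$), and then quotes Gilbarg--Trudinger Lemma 7.2, which yields $\|u-u_B\|_{L^q(B)}\le q^{1-s/n+1/q}|B(0,1)|^{1-s/n}|B|^{1/q}\|\cm(g)\|_{L^{n/s}(4B)}$; combined with the $L^{n/s}$-boundedness of $\cm$, and since $q^{1/q}$ is bounded, this is exactly the $q^{(n-s)/n}$ growth you identify. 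Your proposed alternative --- Hedberg's splitting of the truncated Riesz potential with the radius optimized in $q$ --- is a standard and equally valid derivation of the same estimate, so this is a difference of tool rather than of route; but as written your text is a plan for the decisive inequality rather than a proof of it. Two small additional points: the sharp growth is only used (and in the paper only available) for $q\ge n/s$, so the finitely many terms of the series with $k\,n/(n-s)<n/s$ must be treated separately, which the paper does by H\"older's inequality; and your preliminary H\"older reduction from $\|g\|_{L^{n/s}(4B)}$ to $\|g\|_{L^{p}(4B)}$ becomes unnecessary once one works directly with the potential representation.
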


 \begin{proof}

Assume that $B\equiv B(x_0,\,2^{-k_0})$ for some $x_0\in\rn$ and $k_0\in\zz$.
Let $u\in\dot M^{s,\,p}(4B)$ and $g\in\cd^s(u)$ such that
$\|g\|_{L^{n/s}(4B)}\le 2 \|u\|_{\dot M^{s,\,p}(4B)}$.
We extend $g$ to the whole $\rn$ by settting $g(z)=0$ for all $z\in\rn\setminus 4B$.
For every Lebesgue point $x$ of $u$, we have
\begin{eqnarray*}
 |u(x)- u_{B(x ,\,2^{-k_0-1})}|
&&\le \sum_{j\ge k_0+1} |u_{B(x ,\,2^{-j-1})}- u_{B(x ,\,2^{-j})}|+|u_{B(x ,\,2^{-k_0+1})}-u_B|\\
&&\ls \sum_{j\ge k_0+1}  \bint_{B(x ,\,2^{-j})}|u(z)- u_{B(x ,\,2^{-j})}|\,dz\\
&&\ls \sum_{j\ge k_0+1}  \bint_{B(x ,\,2^{-j})}|u(z)- u_{B(x,\,2^{-j+2})\setminus B(x ,\,2^{-j+1})}|\,dz\\
&&\ls \sum_{j\ge k_0+1}  \bint_{B(x ,\,2^{-j})}\bint_{B(x,\,2^{-j+2})\setminus B(x ,\,2^{-j+1})}
|u(z)- u(y)|\,dy\,dz\\
&&\ls \sum_{j\ge k_0+1}  \bint_{B(x ,\,2^{-j})}\int_{B(x,\,2^{-j+2})\setminus B(x ,\,2^{-j+1})}
\frac{|g(z)+ g(y)|}{|y-x|^{n-s}}\,dy\,dz\\
&&\ls \sum_{j\ge k_0+1}\int_{B(x,\,2^{-j+2})\setminus B(x ,\,2^{-j+1})}
\frac{|\cm(g)(y)|}{|y-x|^{n-s}}\,dy\\
&&\ls\int_{B(x_0,\,2^{-k_0+2})}
\frac{|\cm(g)(y)|}{|y-x|^{n-s}}\,dy.
\end{eqnarray*}
Similarly,
\begin{eqnarray*}
 |u_B-u_{B(x ,\,2^{-k_0-1})}|&& \ls \bint_B|u(z)-u_B|\,dz
\ls\int_{B(x_0,\,2^{-k_0+2})}
\frac{|\cm(g)(y)|}{|y-x|^{n-s}}\,dy.
\end{eqnarray*}
 Thus,
$$|u(x)-u_B|\ls \int_{B(x_0,\,2^{-k_0+2})}
\frac{|\cm(g)(y)|}{|y-x|^{n-s}}\,dy.$$
Then  by \cite[Lemma 7.2]{gt}, for all $q\ge n/s$,
$$\|u-u_B\|_{L^q(B)}\le q^{1-s/n+1/q}|B(0,\,1)|^{1-s/n}|B|^{1/q}\|\cm(g)\|_{L^{n/s}(4B)},$$
which together with the $L^{n/s}(\rn)$-boundedness of $\cm$ implies that
$$\bint_B|u(z)-u_B|^q\,dz\ls q^{1+(n-s)/(nq)} |B(0,\,1)|^{nq/(n-s)}\|g\|^q_{L^{n/s}(4B)},$$
and hence for all $q\ge n/s-1$,
$$\bint_B|u(z)-u_B|^{qn/(n-s)}\,dz\ls \frac{nq}{n-s}  \lf(|B(0,\,1)|\frac{nq}{n-s}\|g\|^{n/(n-s)}_{L^{n/s}(4B)} \r)^q.$$
Then taking $\sz> [e|B(0,\,1)|n/(n-s)]^{(n-s)/n}$, we have
$$\bint_{B}\sum_{j\ge \lfloor n/s\rfloor} \frac1{j!}\lf(\frac{|u(x)-u_B|}{\sz\|g\|_{L^{n/s}(4B)}}\r)^{jn/(n-s)}\,dx\ls
 \sum_{j\ge1}\lf(\frac{n|B(0,\,1)|}{(n-s)\sz^{n/(n-s)}}\r)^j\frac{j^j}{(j-1)!}\ls1.$$
Notice that by H\"older inequality, we have
$$\bint_{B}\sum_{j=0}^{\lfloor n/s\rfloor}
\frac1{j!}\lf(\frac{|u(x)-u_B|}{\sz\|g\|_{L^{n/s}(4B)}}\r)^{jn/(n-s)}\,dx
\ls \sum_{j=0}^{\lfloor n/s\rfloor} \lf(\bint_{B}
 \frac{|u(x)-u_B|^{n/s}}{\|g\|^{n/s}_{L^{n/s}(4B)}}\,dx\r)^{ (n-s)/(j-s)}\ls1.$$
This gives \eqref{e4.1} and thus finishes the proof of Lemma \ref{l4.1}.
 \end{proof}

\begin{proof}[Proof of Theorem \ref{t1.3}]

(i) Assume that $\boz$ is a weak carrot domain.
Since $\boz$ is bounded (see \cite[Corollary 1]{ss90}), we may assume that $|\boz|=1$.
Let $$\wz \phi_s(t)=\exp\lf(t^{n/(n-s)}\r)-\sum_{j=0}^{j_0}\frac1{j!}t^{jn/(n-s)}$$
for $t\in(0,\,\fz)$, where $j_0$ denotes the maximal integer no more than $n/s-1$.
Since $\wz \phi_s(t)\sim \phi_s(t)$ for $t\ge 1$, so we only need to prove
\eqref{e1.3} for $\wz \phi_s$; see \cite{a75}.
It further suffices to prove that
there exists a $\sz\in(0,\,1)$ such that for all
  $u\in \dot M^{s,\,n/s}_\ball(\boz)$ with $\|u\|_{\dot M^{s,\,n/s}_\ball(\boz)}=1$,
 \begin{equation}\label{e4.2}
  \int_\boz \wz\phi_s\lf(\sz|u(x)-u_\boz|\r)\,dx\le1.
 \end{equation}

Let $u\in \dot M^{s,\,n/s}_\ball(\boz)$ with $\|u\|_{\dot M^{s,\,n/s}_\ball(\boz)}=1$
and write
\begin{eqnarray*}
\int_\boz \wz\phi_s\lf(\sz|u(x)-u_\boz|\r)\,dx&&= \sum_{j=j_0+1}\frac1{j!}   \sz  ^{jn/(n-s)}
\int_\boz |u(x)-u_\boz|^{jn/(n-s)}\,dx.
\end{eqnarray*}
Since $\boz\subset\cup_{z\in\boz}B(z,\,d(z,\,\boz^\complement)/10)$, then by the standard $1/5$-covering theorem,
there exist points $\{z_i\}_i\subset\boz$ such that $\{B(z_i,\,d(z,\,\boz^\complement)/50)\}_i$ are pairwise disjoint,
  $$1\le \sum_{i}\chi_{B(z_i,\,d(z_i,\,\boz^\complement)/10)}\le \wz C\chi_\boz$$ for some
fixed positive constant $\wz C$. Denote by $W$ the set of balls
$\{B(z_i,\,d(z_i,\,\boz^\complement)/10)\}$.
Let $B_0$ be a fixed ball in $W$ with the largest radius. Denote by $x_B$ the center of ball $B$
and specially $x_0$ of $B_0$.
Then
\begin{eqnarray*}
&& \int_\boz|u(x)-u_\boz|^{jn/(n-s)}\,dx \\
&&\quad\le 2^{jn/(n-s)} \int_\boz|u(x)-u_{B_0}|^{jn/(n-s)}\,dx \\
&&\quad\le 2^{jn/(n-s)} \sum_{B\in W}\int_B|u(x)-u_{B_0}|^{jn/(n-s)}\,dx \\
&&\quad\le 4^{jn/(n-s)} \sum_{B\in W}\int_B|u(x)-u_{B}|^{jn/(n-s)}\,dx
+4^{jn/(n-s)}\sum_{B\in W }|B||u_B-u_{B_0}|^{jn/(n-s)}\\
&&\quad\equiv I_1(j)+I_2(j).
\end{eqnarray*}

Observe that $\sum_{B\in W}\|u\|^{n/s}_{\dot M^{s,\,n/s}(4B)}\le \wz C \|u\|_{\dot M^{s,\,p}(\boz)}\le \wz C$.
Choose $0<\sz< C_1/8$ such that
$$C_2\wz C[(4\sz)(C_1)^{-1}]^{(j_0+1)n/(n-s)}\le 1/2,$$
where $C_1$ and $C_2$ are the constants from Lemma \ref{l4.1}.
Then  by $|\boz|=1$ and Lemma \ref{l4.1},
we have
\begin{eqnarray*}
&&\sum_{j=j_0+1}^\fz \frac1{j!}   \sz  ^{jn/(n-s)} I_1(j)\\
&& \quad
\le \sum_{B\in W} [4\sz(C_1)^{-1}]^{(j_0+1)n/(n-s)} \int_B \sum_{j=j_0+1}  \frac1{j!} (C_1)^{jn/(n-s)}|u(x)-u_{B}|^{jn/(n-s)}\,dx\\
&&\quad\le \sum_{B\in W}[\|u\|_{\dot M^{s,\,n/s}(4B)}(4\sz)(C_1)^{-1}]^{(j_0+1)n/(n-s)}\int_B \exp\lf(C_1\frac{|u(x)-u_B|}{\|u\|_{\dot M^{s,\,n/s}(4B)}}\r)^{n/(n-s)}\,dx\\
&&\quad\le C_2[(4\sz)(C_1)^{-1}]^{(j_0+1)n/(n-s)} \sum_{B\in W}\|u\|^{n/s}_{\dot M^{s,\,n/s}(4B)}\le1/2.
\end{eqnarray*}

To estimate $I_2(j)$, for each $B\in W\setminus\{B_0\}$,
let $\gz$ be the geodesic joining $x_0$ and $x_B$.
By using the Bescovitch covering lemma (see \cite{s93}) and some arguments similar to
these in the proofs of \cite[Theorem 4.1] {bk96} and \cite[Lemma 3.2]{s10},
we can find a family of balls $\cb\equiv\{B_i\}_{i=0}^N$ such that

 a) $B_i\equiv B(w_i,\,d(w_i,\,\boz^\complement)/10)$
with $w_i\in\gz$ for all  $i=0,\,\cdots,\,N$, $w_0=x_0$ and $w_N=x_B$;

 b) $B_i\cap B_{i+1}\ne\emptyset$ for all $i=0,\,\cdots,\,N-1$;

 c) $\sum_{i=1}^N\chi_{2B_i}(z)\le \overline C$
for all $z\in\boz$, where the constant $\overline C$ only depends on the dimension $n$.

Let $\wz w_i \in B_i\cap B_{i+1}$ and $\wz r_i\equiv \min\{d(w_i,\,\boz^\complement),\,d(w_{i+1},\,\boz^\complement)\}$
 for all $i=0,\,\cdots,\,N-1$.
Notice that b) implies that
$$\frac 9{11}d(w_{i+1},\,\boz^\complement)\le d(w_i,\,\boz^\complement)\le \frac{11}9d({w_i+1},\,\boz^\complement)$$
for all $i=0,\,\cdots,\,N-1$.
Since $d(z,\,\boz^\complement)\sim  d(w_i,\,\boz^\complement)$ for all $z\in B_i$,
so by c), we have
 $$N\ls \sum_{i=0}^N\int_{B_i\cap \gz}\frac1{d(z,\,\boz^\complement)}\,|dz|\ls k_\boz(x_B,\,x_0).$$
Thus, by these, c) and the H\"older inequality, we have
\begin{eqnarray*}
|u_B-u_{B_0}|&&\ls\sum_{j=0}^{N-1}|u_{B_i}-u_{B(\wz w_i,\,\wz r_i)}|+|u_{B(\wz w_i,\,\wz r_i)}- u_{B_{i+1}}|\\
&&\ls\sum_{j=0}^N\bint_{2B_i}|u(z) -u_{2B_i}|\,dz\\
&&\ls \sum_{j=0}^N [d(w_i,\,\boz^\complement)]^{-n+s }\int_{2B_i} g(x)\,dx\\
&&\ls \sum_{j=0}^N \lf\{
\int_{2B_i} [g(x)]^{n/s}\,dx\r\}^{s/n}\ls N^{(n-s)/n}\ls [k_\boz(x_B,\,x_0)]^{ (n-s)/n}.
\end{eqnarray*}
Let $C_3$ be the constant from the preceding inequality and
notice that there exists a positive constant $C_4$ such that for all $x\in B$ and $B\in W\setminus\{B_0\}$,
$k_\boz(x_B,\,x_0)\le C_4 k_\boz(x,\,x_0)$. By $|\boz|=1$, we have
\begin{eqnarray*}
 \sum_{j=j_0+1}^\fz \frac1{j!}\sz^{jn/(n-s)} I_2(j)
&&\le  \sum_{j=j_0+1}^\fz \frac1{j!}(4\sz C_3)^{jn/(n-s)}
 \sum_{j=j_0+1}^\fz\sum_{B\in W\setminus\{B_0\}}|B|[k_\boz(x_B,\,x_0)]^{j (n-s)/n}\\
&&\le\sum_{j=j_0+1}^\fz \frac1{j!}  (4\sz C_3C_4)^{jn/(n-s)} \int_{\boz\setminus B_0}  [k_\boz(x,\,x_0)]^{j  (n-s)/n}\,dx\\
&&\le \sum_{j=j_0+1}\frac1{j!} (4\sz C_3C_4)^{jn/(n-s)}  \sz  ^{jn/(n-s)}\int_\boz [k_\boz(x_0,\,x)]^j\,dx\\
&&\le \int_\boz [\exp((4\sz C_3C_4)^{n/(n-s)}k_\boz(x_0,\,x))-1]\,dx.
\end{eqnarray*}
Then  by Lemma \ref{l2.1}, we can choose $\sz$ small enough such that
$\sum_{j=j_0+1}^\fz \frac1{j!}\sz^{jn/(n-s)} I_2(j) \le 1/2,$
which together with
$\sum_{j=j_0+1}^\fz \frac1{j!}\sz^{jn/(n-s)} I_1(j) \le 1/2 $ implies \eqref{e4.2}.
This finishes the proof of Theorem \ref{t1.3}(i).

(ii) Assume that $\boz$ has the slice property with respect to  $y$ and $C_5\ge1$
as in Definition \ref{d2.4}.
Then for every $x\in\boz$, by Definition \ref{d2.4}, there exist a rectifiable curve $\gz$  and
a sequence of $\{S_i\}_{i=0}^j$ for some $j\ge0$, satisfying (i) through (iv) of Definition \ref{d2.4}.
Without loss of generality, we may assume that   $j\ge2$.
In fact, as pointed out  by Buckley and Koskela \cite[p.\,890]{bk96},
  Definition \ref{d2.4}(iii) and (iv)
implies that  $j+1\ge k_\boz(x,\,y)/C_5$.
Observe that if $k(x,\,y)\le 2C_5$, then \eqref{e2.1} is clearly satisfied.
So we only need to consider the case $j\ge2$.

For each $i=1,\,\cdots,\,j-1$, define the function $u_i$ by setting
$u_i(z)\equiv\inf_{\wz\gz}\ell(\wz\gz\cap S_i)$ for all $z\in\boz$,
where the infimum is taken over all rectifiable curves $\wz \gz$ joining $x$ and $z$.
Obviously, $u_i(z)=0$ for $z\in\cup_{k=0}^{i-1}S_k$ and $u_i(z)$ is a constant for $z\in\cup_{k=i+1}^{j}S_k$.
Then  by an argument similar to the one in the proof of
 Lemma \ref{l3.2},
we can prove that $u\in\dot M^{s,\,n/s}_\ball(\boz)$ and
$g_i\equiv r_i^{1-s}\chi_{B(x_i,\,2C_5d(x_i,\,\boz^\complement))}$ is a constant multiple of an element of
$\cd^{s,\,1/(16C_5)}_\ball(u)$,
where $r_i=\diam S_i\sim d(x_i,\,\boz^\complement) $ by Definition \ref{d2.4}(iii).
We omit the details.
Then
$\|u_i\|_{\dot M^{s,\,p}_\ball(\boz)}\ls r_i^{1-s+n/p}$.
Notice that   Definition \ref{d2.4}(ii) implies that $|u_i(x)-u_i(y)|\ge C_5^{-1}d(x_i,\,\boz^\complement)\gs r_i$.
Moreover, define
$u=j^{-s/n}\sum_{i=1}^{j-1} r_i^{-1}u_i$.
Then the function
$$g\equiv j^{-s/n}\sum_{i=1}^{j-1} r_i^{-1}g_i=j^{-s/n}\sum_{i=1}^{j-1} r_i^{-s}
\chi_{B(x_i,\,2C_5d(x_i,\,\boz^\complement))}$$ is a constant multiple of an element of
$\cd^{s,\,1/(16C_5)}_\ball(u)$, which together with the Fefferman-Stein vector-valued   inequality of the
Hardy-Littlewood maximal function $\cm$ (see, for example, \cite{s93}) and Definition \ref{d2.4} imply that
\begin{eqnarray}\label{e4.3}
 \|u\|^{n/s}_{\dot M^{s,\,n/s}_\ball(\boz)}&&\ls j^{-1}
\int_\boz\lf(\sum_{i=1}^{j-1}r_i^{-s}
\chi_{B(x_i,\,2C_5d(x_i,\,\boz^\complement))}(z)\r)^{n/s}\,dz\\
&&\ls
j^{-1}
\int_\boz\lf(\sum_{i=1}^{j-1}
\lf[\cm\lf(\lf[r_i^{-s}\chi_{B(x_i,\,C_5^{-1}d(x_i,\,\boz^\complement))}\r]^{1/2}\r)(z)\r]^2\r)^{n/s}\,dz\nonumber\\
&&\ls j^{-1}
\int_\boz\lf(\sum_{i=1}^{j-1}r_i^{-s}
 \chi_{B(x_i,\,C_5^{-1}d(x_i,\,\boz^\complement))} (z)\r)^{n/s}\,dz\nonumber\\
&&\ls j^{-1}
\int_\boz \sum_{i=1}^{j-1}r_i^{-n}
 \chi_{B(x_i,\,C_5^{-1}d(x_i,\,\boz^\complement))} (z) \,dz\ls 1.\nonumber
\end{eqnarray}
On  the other hand,  since
$u(z)\ge (j-1)^{1-s/n}$ for $z\in S_j$,
then for $t^{n/(n-s)}\le  (j-1)/\log(1+[C_5 ^{-1}d(x,\,\boz^\complement)]^{-n})$, we have
$$
\int_\boz \phi_s\lf(\frac{u(z)}{t}\r)\,dz\ge\int_{S_j} \phi_s\lf(\frac{u(z)}{t}\r)\,dz
\ge  [C_5^{-1}d(x,\,\boz^\complement)]^{-n}|S_j|>1,$$ which implies that
$$\|u\|_{\phi_s(L)(\boz)}\gs\lf\{\frac {j-1}{\log(1+[C_5^{-1}d(x,\,\boz^\complement)]^{-n})}\r\}^{(n-s)/n}\gs
\lf\{\frac {j }{\log(1+[ d(x,\,\boz^\complement)]^{-1})}\r\}^{(n-s)/n}.$$
From this, \eqref{e4.3} and $s$-Trudinger inequality, it follows that
 $j\ls{\log(1+[d(x,\,\boz^\complement)]^{-1})}$, which further implies that
$$\int_\gz\frac1{d(z,\,\boz^\complement)}\,|dz|\ls\log(1+[d(x,\,\boz^\complement)]^{-1}).$$
This means that $\boz$ is a weak carrot domain and thus
finishes the proof of Theorem \ref{t1.3}(ii).
\end{proof}

{\bf Acknowledgements.}
The author would like to thank Professor Pekka Koskela 
for his kind suggestions and significant discussions on this topic
and also thank Professor Dachun Yang for his kind suggestions.

\bigskip

\noindent Yuan Zhou

\noindent Department of Mathematics and Statistics,
P. O. Box 35 (MaD),
FI-40014, University of Jyv\"askyl\"a,
Finland

\smallskip

\noindent{\it E-mail address}:  \texttt{yuzhou@cc.jyu.fi}

\end{document}